\documentclass[11pt]{article}
\usepackage{amsmath,amssymb,amsthm}
\usepackage{color}
\usepackage{pdfsync}
\usepackage[english]{babel}
\usepackage[utf8]{inputenc}

\usepackage{fancybox}
\usepackage{listings}
\usepackage{mathtools}
\usepackage[bottom]{footmisc}
\usepackage{verbatim}

\usepackage{float}
\usepackage{makeidx}

\setlength{\textwidth}{16cm}

\setlength{\textheight}{22.5cm}

\setlength{\topmargin}{-1.5cm}

\setlength{\oddsidemargin}{-1mm}

\setlength{\evensidemargin}{-1mm}

\setlength{\abovedisplayskip}{3mm}

\setlength{\belowdisplayskip}{3mm}

\setlength{\abovedisplayshortskip}{0mm}

\setlength{\belowdisplayshortskip}{2mm}

\setlength{\baselineskip}{12pt}

\setlength{\normalbaselineskip}{12pt}

\normalbaselines

\pagestyle{plain}

\raggedbottom

\newtheorem{Theorem}{Theorem}[section]

\newtheorem{Definition}[Theorem]{Definition}

\newtheorem{Proposition}[Theorem]{Proposition}

\newtheorem{Lemma}[Theorem]{Lemma}

\newtheorem{Corollary}[Theorem]{Corollary}

\newtheorem{Remark}[Theorem]{Remark}

\newcommand{\N}{\mathbb N}

\newcommand{\RR}{{{\rm I} \kern -.15em {\rm R} }}
\newcommand{\R}{{{\rm I} \kern -.15em {\rm R} }}

\newcommand{\C}{{{\rm l} \kern -.42em {\rm C} }}

\newcommand{\nat}{{{\rm I} \kern -.15em {\rm N} }}

\newcommand{\be}{\begin{equation}}
\newcommand{\ee}{\end{equation}}
\newcommand{\beq}{\begin{eqnarray}}
\newcommand{\eeq}{\end{eqnarray}}
\newcommand{\beqs}{\begin{eqnarray*}}
\newcommand{\eeqs}{\end{eqnarray*}}
\newcommand{\bt}{\begin{Theorem}}
\newcommand{\et}{\end{Theorem}}
\newcommand{\br}{\begin{Remark}}
\newcommand{\er}{\end{Remark}}
\newcommand{\bc}{\begin{Corollary}}
\newcommand{\ec}{\end{Corollary}}
\newcommand{\bl}{\begin{Lemma}}
\newcommand{\el}{\end{Lemma}}
\newcommand{\bd}{\begin{definition}}
\newcommand{\ed}{\end{definition}}
\newcommand{\bp}{\begin{Proposition}}
\newcommand{\eP}{\end{Proposition}}

\title{Opinion dynamics under common influencer assumption\\ or leadership control}

\bigskip

\author{Chiara Cicolani\footnote{Dipartimento di Ingegneria e Scienze dell'Informazione e Matematica\\ Universit\`a dell''Aquila\\
Via Vetoio,  67100 L'Aquila - Italy\\ Email: chiara.cicolani@graduate.univaq.it},
\hspace{0.2cm}
Badis Ouahab \footnote{ \'Ecole  Militaire Polytechnique, 16046 Algiers, Algeria \\   						            Email: badisouahab77@gmail.com}
\&\hspace{0.2cm}Cristina Pignotti\footnote{Dipartimento di Ingegneria e Scienze dell'Informazione e Matematica\\ Universit\`a dell''Aquila\\
Via Vetoio,  67100 L'Aquila - Italy\\ Email: cristina.pignotti@univaq.it}}
	
\begin{document}
\maketitle

\begin{abstract}
We study Hegselmann-Krause type opinion formation models with non-universal interaction and time-delayed coupling. We assume the presence of a common influencer between two different agents. Moreover, we explore two cases in which such an assumption does not hold but leaders with independent opinion are present. By using careful estimates on the system's trajectories, we are able to prove asymptotic convergence to consensus estimates. Some numerical tests illustrate the theoretical results.
\end{abstract}

\section{Introduction}
In recent years, multiagent systems have become a highly attractive research topic due to their numerous applications in various scientific fields. They naturally appear in biology \cite{Cama, Carrillo, CS1}, ecology \cite{Sole}, economics \cite{Carrillo, Marsan}, social sciences \cite{Bellomo, BN, BHT, Castellano, Campi}, physics \cite{DH, Stro}, control theory and optimization \cite{Basco,D, PaolucciP, PRT, WCB}, engineering, and robotics \cite{Bullo, Desai}. For additional applications, see also \cite{Hel, Jack, XWX}.

A significant feature often analyzed is the potential emergence of self-organization, leading the group's agents to exhibit globally collective behaviors. Here, we focus on the renowned Hegselmann-Krause model for opinion formation, originally proposed in \cite{HK}. Since then, several generalizations have been proposed (see, for instance, \cite{Bellomo, BN, CFT, CFT2, Ceragioli, JM}).

One of the most natural extensions of the Hegselmann-Krause model involves the analysis of time-delayed interactions, to account for the time required for each agent to receive information from other agents or reaction times. Opinion formation models in the presence of time delay effects have already been studied by several authors, see, for example, \cite{CPP,CicPig,CPM2A,onoff,3,4,P,PaolucciP}. 
 Regarding the second-order version of the Hegselmann-Krause model, namely the Cucker-Smale model \cite{CS1}, introduced to describe flocking phenomena, delayed interactions have also been considered in various works, see, for example, 
\cite{Chen,CH,CL,CP,CCP,Cont,EHS,HM,PR,PT,Cartabia}.

In this paper, we focus on studying the convergence to consensus for some Hegselmann-Krause opinion formation models with time-delayed coupling. We assume that the time delay between two agents is dependent on the agents themselves. Specifically, we examine scenarios where interactions are not universal but where a common influencer always exists between any two different agents.

Consider a finite set of $N \in \N, N \geq 3,$ agents. Let $x_i(t) \in \R^d$ be the opinion of the i-th agent at time $t.$ We assume that there is a time lag in the interaction between the agents, described by  positive constants $\tau_{ij}, \ i,j=1,\dots,N.$ Moreover, we assume that the interaction is non-universal. Then, the interaction between the particles of the system is described by the following Hegselmann-Krause type model:
\begin{equation}\label{eq1}
\frac{d}{d t}x_i(t)= \frac{1}{N-1}\sum_{j \neq i } \chi_{ij} a_{ij}(t)(x_j(t-\tau_{ij})-x_i(t)), \quad t>0, \ i=1,...,N, 
\end{equation}
with weights $a_{ij}(t)$ of the form 
\begin{equation} \label{a}
 a_{ij}(t):=\psi_{ij}(x_i(t),x_j(t-\tau_{ij})), \quad \  i,j=1,...,N,
\end{equation}
where the influence functions $\psi_{ij}:\R^d\times \R^d \rightarrow \R$ are positive, bounded and continuous. We denote
$$
K_{ij}:= \parallel \psi_{ij} \parallel_{\infty}, \quad \forall i,j=1,\dots,N,
$$
and let 
\begin{equation} \label{K}
K:=\max_{i,j=1,\dots,N} K_{ij}.
\end{equation}
The function $\chi_{ij}$ for $i,j=1,\dots,N$ is defined as
\begin{equation} \chi_{ij}=\begin{cases} 1 \quad \mbox{if $j$ transmits information to $i$,} \\
0 \quad  \mbox{otherwise}. \end{cases} 
\end{equation}
Moreover, let us assume the following initial conditions: 
\begin{equation} \label{IC}
x_i(t)=x_i^0(t), \quad i=1,\dots,N, \ t \in [-\tau,0],
\end{equation}
where 
$$\tau:=\max_{i,j=1,\dots,N} \tau_{ij},$$
and  $x_i^0:[-\tau, 0]\rightarrow \R^d, \  i=1, \dots, N,$ are continuous functions. For existence results about the above model we refer to the classical books \cite{Halanay,Hale}. Here, we are interested in convergence to consensus results.

We make the following assumption over the structure of the system:

\textbf{(CI)} For all $i,j \in \left\{1,\dots,N \right\}$ there exists $k \in \left\{1,\dots,N \right\}$ such that $\chi_{ik}=\chi_{jk}=1$ and $\tau_{ik}=\tau_{jk}.$  This means that for each pair of agents $(x_i, x_j)$ there's another agent $x_k$, that we call {\em common influencer of  $x_i$ and $x_j,$ } that transmits information to both them with the same time delay.

 Let us define the \emph{diameter function} as
$$d(t):= \max_{i,j=1,\dots,N} |x_i(t)-x_j(t)|.$$
\begin{Definition}\label{def1} We say that a solution converges to consensus if $$ \lim_{t \rightarrow \infty} d(t)=0$$
\end{Definition}

After preliminary estimates, we will prove the convergence to consensus for solutions to system \eqref{eq1}. As an example falling in the previous setting, we will consider a model for a population with a leader subset.
When the leaders' numbers is $1$ or $2,$ the common influencer assumption is not satisfied. So, we will study these cases using different appropriate arguments. In the case of a unique leader, we will also analyze a control problem (cf.  \cite{PaolucciP,WCB}).

The rest of the paper is organized as follows.  In Section \ref{Prel}, we introduce some notations and provide preliminary lemmas. In Section \ref{proof}, we state and prove the consensus result under the assumption {\bf (CI)}. Sections \ref{oneleader} and \ref{twoleaders} explore different systems for which the Common Influencer Assumption does not hold. In Section \ref{oneleader}, we consider the presence of a leader that influences all other agents but is not influenced by anyone else, examining both the case of a constant trajectory for the leader and a controlled trajectory. Section \ref{twoleaders} deals with the scenario involving two leaders. Finally, in Section \ref{num}, we present numerical tests that validate the theoretical results.

\section{Preliminaries} \label{Prel}
 In this section, we present some preliminary results concerning system \eqref{eq1}. We omit the  proofs since they are analogous to the ones of  Lemma 2.2, Lemma 2.3, and Lemma 2.5 of \cite{CPM2A}.

\begin{Lemma} \label{2.1}
Let $\{x_i(t)\}_{i=1}^N$ be a solution to \eqref{eq1}-\eqref{IC}. Then, for each $v \in \R^d$ and $T \geq 0 $, we have 
\begin{equation} \label{eq4}
\min_{j=1,...,N} \min_{s \in [T-\tau,T]} \langle x_j(s), v \rangle \leq \langle x_i(t), v \rangle \leq \max_{j=1,...,N} \max_{s \in [T-\tau,T]}\langle x_j(s), v \rangle, 
\end{equation}
for all $t\ge T-\tau,$ $i=1, \dots, N.$
\end{Lemma}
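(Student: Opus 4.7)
Plan: Since the claim is an inequality of the scalar quantities $\varphi_i(t):=\langle x_i(t),v\rangle$, I would reduce the lemma to a one-dimensional maximum principle for the $\varphi_i$. Taking the inner product of \eqref{eq1} with $v$ gives
$$\frac{d}{dt}\varphi_i(t)=\frac{1}{N-1}\sum_{j\neq i}\chi_{ij}a_{ij}(t)\bigl(\varphi_j(t-\tau_{ij})-\varphi_i(t)\bigr),$$
so each $\varphi_i$ evolves through a convex-combination-type relation against delayed values of the $\varphi_j$. By replacing $v$ with $-v$ the lower bound follows from the upper bound, so it suffices to set $M:=\max_{j}\max_{s\in[T-\tau,T]}\varphi_j(s)$ and show $\varphi_i(t)\le M$ for all $i$ and all $t\ge T-\tau$.

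For $t\in[T-\tau,T]$ the inequality is immediate from the definition of $M$, so the substance lies in $t>T$. I would run a standard first-crossing / perturbation argument: fix $\varepsilon>0$, set $M_\varepsilon:=M+\varepsilon$, and suppose by contradiction that
$$t^*:=\inf\{t\ge T:\ \max_i\varphi_i(t)\ge M_\varepsilon\}<\infty.$$
Continuity of the $\varphi_i$'s forces $t^*>T$, the existence of a maximizing index $i^*$ with $\varphi_{i^*}(t^*)=M_\varepsilon$, and the \emph{strict} inequality $\varphi_j(s)<M_\varepsilon$ for every $j$ and every $s\in[T-\tau,t^*)$.

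The crucial step is evaluating the projected equation at $t^*$. Because each $\tau_{ij}$ is strictly positive, $t^*-\tau_{i^*j}<t^*$, hence $\varphi_j(t^*-\tau_{i^*j})<M_\varepsilon=\varphi_{i^*}(t^*)$ for every such $j$; combining this with the positivity of the weights $a_{i^*j}$ yields $\tfrac{d}{dt}\varphi_{i^*}(t^*)<0$. A strictly negative derivative at $t^*$ is inconsistent with $\varphi_{i^*}$ attaining the value $M_\varepsilon$ from strictly below, giving the desired contradiction; letting $\varepsilon\downarrow 0$ concludes $\varphi_{i^*}(t)\le M$ for all $t\ge T$.

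The only subtlety I anticipate is the degenerate case $\chi_{i^*j}=0$ for every $j\neq i^*$, in which the strict negativity of $\tfrac{d}{dt}\varphi_{i^*}(t^*)$ fails because the right-hand side of \eqref{eq1} is identically zero. In that configuration, however, $x_{i^*}$ is constant for $t\ge 0$ and equal to $x_{i^*}^0(0)$, so $\varphi_{i^*}(t^*)$ coincides with $\varphi_{i^*}(s)$ for some $s\in[T-\tau,T]$ and is therefore bounded by $M<M_\varepsilon$, contradicting $\varphi_{i^*}(t^*)=M_\varepsilon$ directly. This bookkeeping about disconnected agents is the only branching point; everything else is a routine consequence of the representation above and the positivity/continuity assumptions on the $a_{ij}$.
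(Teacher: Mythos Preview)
Your argument is correct. The paper itself omits the proof, pointing to Lemma~2.2 of \cite{CPM2A}; the first-crossing (perturbation) argument you sketch is exactly the standard route taken there, and your handling of the degenerate case $\chi_{i^*j}\equiv 0$ (so that $x_{i^*}$ is frozen) cleanly closes the only potential gap.
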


We now introduce the following notation. 
\begin{Definition} \label{def}
We define 
\begin{equation}\label{D0}
D_0:=\max_{i,j=1,\dots,N} \max_{s,t \in [-\tau,0]} \vert x_i(s)-x_j(t) \vert,
\end{equation}
and in general for all $n \in \N,$
\begin{equation}\label{Dn}
D_n:=\max_{i,j=1,\dots,N} \max_{s,t \in [n\tau-\tau,n\tau]} \vert x_i(s)-x_j(t) \vert.
\end{equation}
\end{Definition}

Let us denote with $\N_0:= \N \cup \{0\}.$
\begin{Lemma} \label{2.2}
Let $\{x_i(t)\}_{i=1}^N$ be a solution to \eqref{eq1}-\eqref{IC}.
For each $n \in \N_0$ and $\forall \ i,j=1,...,N$ we get
\begin{equation}\label{eq12}
|x_i(t)-x_j(t)| \leq D_n, \quad \forall \ t \geq n\tau-\tau.\
\end{equation}
\end{Lemma}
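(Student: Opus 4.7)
The plan is to apply Lemma \ref{2.1} directly, using a cleverly chosen vector $v$ to convert inner-product bounds into a norm bound. Set $T = n\tau \geq 0$. Then Lemma \ref{2.1} gives, for every $v \in \R^d$, every $i$, and every $t \geq n\tau - \tau$, the two-sided estimate
\[
\min_{k} \min_{s \in [n\tau-\tau, n\tau]} \langle x_k(s), v \rangle \leq \langle x_i(t), v \rangle \leq \max_{k} \max_{s \in [n\tau-\tau, n\tau]} \langle x_k(s), v \rangle.
\]
The same estimate holds with $j$ in place of $i$. Subtracting the lower bound for $x_j$ from the upper bound for $x_i$ yields
\[
\langle x_i(t) - x_j(t), v \rangle \leq \max_{k,\ell} \max_{s,u \in [n\tau-\tau, n\tau]} \langle x_k(s) - x_\ell(u), v \rangle.
\]

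Next I would fix $t \geq n\tau - \tau$ and $i,j$. If $x_i(t) = x_j(t)$ there is nothing to prove. Otherwise choose the unit vector $v := (x_i(t) - x_j(t))/|x_i(t) - x_j(t)|$. The left-hand side of the displayed inequality becomes exactly $|x_i(t) - x_j(t)|$, while on the right-hand side Cauchy--Schwarz gives $\langle x_k(s) - x_\ell(u), v \rangle \leq |x_k(s) - x_\ell(u)|$, whose maximum over $k,\ell$ and $s,u \in [n\tau-\tau, n\tau]$ is precisely $D_n$ by Definition \ref{def}. This yields $|x_i(t) - x_j(t)| \leq D_n$, as required.

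There is no real obstacle here, since the argument is an immediate consequence of Lemma \ref{2.1}; the only point that demands a moment of care is verifying that $T = n\tau$ meets the hypothesis $T \geq 0$ for every $n \in \N_0$, which is immediate, and that the interval $[T - \tau, T] = [n\tau - \tau, n\tau]$ used in Lemma \ref{2.1} is exactly the one appearing in the definition of $D_n$. The proof works uniformly for all $n$, so no induction on $n$ is needed.
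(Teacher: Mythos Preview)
Your argument is correct and is precisely the intended one: the paper omits the proof here, referring to \cite{CPM2A}, but the technique there (and throughout this paper, e.g.\ in Lemmas~\ref{2.6} and~\ref{2.7}) is exactly to apply Lemma~\ref{2.1} with $T=n\tau$ and then specialise to the unit vector $v=(x_i(t)-x_j(t))/|x_i(t)-x_j(t)|$, bounding the resulting scalar difference by $D_n$ via Cauchy--Schwarz. Nothing is missing.
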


\begin{Remark} \label{2.3}
Let us note that from Lemma \ref{2.2} it follows, in particular, 
$$ \vert x_i(s)-x_j(t) \vert \leq D_0, \quad \forall s,t \geq -\tau, \ \forall i,j=1,\dots,N.$$
Moreover, it holds that 
\begin{equation} \label{decr_prop}
D_{n+1} \leq D_n, \quad \forall n \in \N_0.
\end{equation}
\end{Remark}
As in \cite{CPM2A}, 
one can find a bound on $| x_i(t)|$ uniform with respect to $t$ and $i=1,...,N$. We have the following lemma (cf. \cite[Lemma 2.5]{CPM2A}).

\begin{Lemma} \label{2.4}
Let $\{x_i(t)\}_{i=1}^N$ be a solution to \eqref{eq1}-\eqref{IC}.
For all $i=1,...,N$ we have
\begin{equation}
|x_i(t)| \leq M_0, \quad \forall \ t \geq 0,
\label{eq16}
\end{equation}
where
$$ M_0:=\max_{i=1,...,N} \max_{s \in [-\tau,0]} |x_i(0)|.$$
\end{Lemma}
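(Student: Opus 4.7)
The plan is to reduce the vector bound to a family of scalar bounds by projecting onto unit directions, and then invoke Lemma \ref{2.1} on each projection.

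First I would fix an arbitrary $i \in \{1,\dots,N\}$ and $t \geq 0$. If $x_i(t) = 0$ the claim is immediate, so assume $x_i(t) \neq 0$ and set $v := x_i(t)/|x_i(t)| \in \R^d$, so that $\langle x_i(t), v \rangle = |x_i(t)|$. Now I apply Lemma \ref{2.1} with this choice of $v$ and $T = 0$, which yields
\begin{equation*}
|x_i(t)| \;=\; \langle x_i(t), v \rangle \;\leq\; \max_{j=1,\dots,N}\, \max_{s \in [-\tau,0]} \langle x_j(s), v \rangle.
\end{equation*}
By the Cauchy--Schwarz inequality, $\langle x_j(s), v \rangle \leq |x_j(s)|\,|v| = |x_j(s)|$ for every $j$ and every $s \in [-\tau,0]$, so the right-hand side is bounded above by $\max_{j}\max_{s\in[-\tau,0]}|x_j(s)|$, which (interpreting the definition of $M_0$ as referring to the initial datum $x_i^0$) equals $M_0$. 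Since $i$ and $t \geq 0$ were arbitrary, \eqref{eq16} follows.

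There is essentially no hard step: the whole argument is a one-shot application of Lemma \ref{2.1}, together with the standard trick of choosing $v$ in the direction of $x_i(t)$ to promote a family of linear bounds to a bound on the norm. The only mild subtlety worth a remark is that Lemma \ref{2.1} is stated for $t \geq T - \tau$, so taking $T=0$ already covers all $t \geq -\tau$, which is more than enough for $t \geq 0$; in particular the bound also holds on the initial interval $[-\tau,0]$ by the very definition of $M_0$.
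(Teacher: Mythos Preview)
Your argument is correct and follows exactly the approach the paper intends: the paper omits this proof, citing \cite[Lemma~2.5]{CPM2A}, and the method there (and throughout the present paper, see e.g.\ the proofs of Lemmas~\ref{2.7} and~\ref{5.7}) is precisely to project onto the unit direction $v=x_i(t)/|x_i(t)|$ and invoke Lemma~\ref{2.1} with $T=0$. Your remark about interpreting $M_0$ via the initial datum $x_i^0(s)$ rather than the literal $x_i(0)$ is also well placed.
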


\begin{Remark} \label{2.5}
    From Lemma \ref{2.3}, since the influence functions $\psi_{ij}$ are continuous, we deduce that
\begin{equation} \label{lower_bound}
\psi_{ij}(x_i(t),x_j(t-\tau_{ij})) \geq \psi_0:= \min_{i,j=1, \dots, N}\min_{|z_1|,|z_2| \leq M_0} \psi_{ij}(z_1,z_2) >0  , 
\end{equation}
for each $t \geq 0 $ and $i,j=1,...,N$.
\end{Remark}
Now, we need the following estimates.
\begin{Lemma} \label{2.6}
Let $\{x_i(t)\}_{i=1}^N$ be a solution to \eqref{eq1}-\eqref{IC}. 
For all $i,j=1,\dots,N,$ unit vector $ v \in \R^d$ and $n \in \N_0$ we have that 
\begin{equation}
\langle x_i(t)-x_j(t),v \rangle \leq e^{-K(t-t_0)} \langle x_i(t_0)-x_j(t_0),v \rangle + (1-e^{-K(t-t_0)})D_n,
\label{eq18}
\end{equation}
for all $t \geq t_0 \geq n\tau.$ Moreover, for all $n \in \N_0$ we get
\begin{equation}
D_{n+1} \leq e^{-K \tau} d(n\tau) + (1-e^{-K\tau})D_n. 
\label{eq20}
\end{equation}
\end{Lemma}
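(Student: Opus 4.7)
My approach is to bound the scalar projections $\langle x_i(t), v\rangle$ using the monotone envelopes provided by Lemma \ref{2.1}. Fix a unit vector $v \in \R^d$ and $n \in \N_0$, and set
\begin{equation*}
M_n^+ := \max_{l=1,\ldots,N}\max_{s\in[n\tau-\tau,\, n\tau]}\langle x_l(s), v\rangle, \qquad M_n^- := \min_{l=1,\ldots,N}\min_{s\in[n\tau-\tau,\, n\tau]}\langle x_l(s), v\rangle.
\end{equation*}
Lemma \ref{2.1} applied with $T = n\tau$ ensures that $M_n^- \leq \langle x_l(s), v\rangle \leq M_n^+$ for every agent $l$ and every $s \geq n\tau - \tau$, while the definition of $D_n$ directly yields $M_n^+ - M_n^- \leq D_n$.

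The heart of the argument is a one-sided Gronwall estimate. For $t \geq t_0 \geq n\tau$ I will set $\psi_i(t) := M_n^+ - \langle x_i(t), v\rangle \geq 0$. Differentiating and substituting \eqref{eq1}, each summand of $-\psi_i'(t)$ has the form $(N-1)^{-1}\chi_{ik}a_{ik}(t)[\langle x_k(t-\tau_{ik}), v\rangle - \langle x_i(t), v\rangle]$. Since $t - \tau_{ik} \geq n\tau - \tau$, the envelope bound gives $\langle x_k(t-\tau_{ik}), v\rangle \leq M_n^+$, so each bracket is at most $\psi_i(t)$; combined with $0 \leq \chi_{ik}a_{ik}(t) \leq K$, this yields $\psi_i'(t) \geq -K\psi_i(t)$ and therefore $\psi_i(t) \geq e^{-K(t-t_0)}\psi_i(t_0)$. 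The completely symmetric argument applied to $\tilde{\psi}_j(t) := \langle x_j(t), v\rangle - M_n^- \geq 0$ delivers $\tilde{\psi}_j(t) \geq e^{-K(t-t_0)}\tilde{\psi}_j(t_0)$.

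Writing $\langle x_i(t) - x_j(t), v\rangle = (M_n^+ - M_n^-) - \psi_i(t) - \tilde{\psi}_j(t)$, substituting the two Gronwall lower bounds and regrouping produces $(1-e^{-K(t-t_0)})(M_n^+ - M_n^-) + e^{-K(t-t_0)}\langle x_i(t_0)-x_j(t_0), v\rangle$; the bound $M_n^+ - M_n^- \leq D_n$ then gives \eqref{eq18}.

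For \eqref{eq20}, I will apply the two Gronwall estimates from $t_0 = n\tau$ but evaluate $\psi_i$ at a time $s \in [n\tau, (n+1)\tau]$ and $\tilde{\psi}_j$ at a possibly distinct $t \in [n\tau, (n+1)\tau]$. The resulting bound for $\langle x_i(s) - x_j(t), v\rangle$ contains factors $e^{-K(s-n\tau)}$ and $e^{-K(t-n\tau)}$; since $s, t \leq (n+1)\tau$ both factors are at least $e^{-K\tau}$, and as they multiply nonnegative quantities, replacing them by $e^{-K\tau}$ yields
\begin{equation*}
\langle x_i(s) - x_j(t), v\rangle \leq (1 - e^{-K\tau})D_n + e^{-K\tau}\,d(n\tau).
\end{equation*}
Choosing $v$ parallel to $x_i(s) - x_j(t)$ and maximizing over $i, j$ and $s, t \in [n\tau,(n+1)\tau]$ delivers \eqref{eq20}. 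The step requiring the most care will be keeping the constant equal to $K$, not $2K$, in the exponent: this forces me to treat the two one-sided envelopes separately and to combine them only at the very last subtraction, rather than trying to estimate $|x_i(t) - x_j(t)|$ directly from the dynamics.
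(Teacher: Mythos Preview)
Your proof is correct and follows essentially the same route as the paper. The paper derives $\frac{d}{dt}\langle x_i(t),v\rangle \le K(M_n-\langle x_i(t),v\rangle)$ and the symmetric lower bound, applies Gr\"onwall, and subtracts; your substitutions $\psi_i=M_n^+-\langle x_i,v\rangle$ and $\tilde\psi_j=\langle x_j,v\rangle-M_n^-$ encode exactly the same differential inequalities, and for \eqref{eq20} you spell out the two--time argument that the paper merely cites from \cite{CPM2A}.
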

\begin{proof}
Let us fix $v \in \R^d$ such that $|v|=1$ and $n \in \N_0.$ We denote by
$$ M_n := \max_{i=1,...,N} \max_{t \in [n\tau-\tau,n\tau]} \langle x_i(t), v \rangle,$$
and 
$$ m_n:= \min_{i=1,...,N} \min_{t \in [n\tau-\tau, n\tau]} \langle x_i(t), v \rangle.$$
Note that $M_n-m_n \leq D_n.$ Now, fix $i=1,\dots,N$ and $t \geq t_0 \geq n\tau.$ Then, by definition of the system \eqref{eq1} and applying Lemma \ref{2.1} with $T=n\tau,$ we have that
\begin{equation*}
\begin{split}
\frac{d}{d t} \langle x_i(t),v \rangle & = \frac{1}{N-1} \sum_{j \neq i} \chi_{ij} a_{ij}(t) \langle x_j(t-\tau_{ij})-x_i(t),v \rangle \\
& \leq \frac{1}{N-1}\sum_{j \neq i} a_{ij}(t) (M_n - \langle x_i(t),v \rangle) \\
& \leq K(M_n - \langle x_i(t),v \rangle) \\
\end{split}
\end{equation*}
where we used that $t-\tau_{ij}\geq n\tau-\tau$ and that
$$ \sum_{j \neq i} \chi_{ij} \leq N-1.$$
By applying the Grönwall's Lemma, we find that 
\begin{equation}\label{eq21}
\langle x_i(t),v \rangle \leq e^{-K(t-t_0)}  \langle x_i(t_0),v \rangle + (1-e^{-K(t-t_0)})M_n .
\end{equation}
On the other hand, for $i=1,\dots,N$ and $t \geq t_0 \geq n\tau,$ following a similar argument, we have that 
\begin{equation*}
\frac{d}{d t} \langle x_j(t),v \rangle \geq K(m_n - \langle x_j(t),v \rangle)
\end{equation*}
 and, applying again the Grönwall's Lemma, 
\begin{equation}\label{eq22}
\langle x_j(t),v \rangle \geq e^{-K(t-t_0)}  \langle x_j(t_0),v \rangle + (1-e^{-K(t-t_0)})m_n .
\end{equation}
From (\ref{eq21}) and (\ref{eq22}), we have that
\begin{equation*}
\begin{split}
\langle x_i(t)-x_j(t),v \rangle & \leq e^{-K(t-t_0)}  \langle x_i(t_0)-x_j(t_0),v \rangle + (1-e^{-K(t-t_0)})(M_n-m_n) \\
& \leq e^{-K(t-t_0)}\langle x_i(t_0)-x_j(t_0),v \rangle + (1-e^{-K(t-t_0)})D_n.
\end{split}
\end{equation*}
So, we have (\ref{eq18}). The inequality \eqref{eq20} follows as in the second part of the proof of Lemma 2.6 in \cite{CPM2A}.
\end{proof}
The following result is crucial in order to prove the exponential convergence to consensus estimate.
    
\begin{Lemma} \label{2.7}
Let $\{x_i(t)\}_{i=1}^N$ be a solution to \eqref{eq1}-\eqref{IC} and let us assume {\bf (CI)}.
Then, there exists a constant $C \in (0,1)$ such that 
\begin{equation} \label{estimate_diam}
d(n\tau) \leq C D_{n-2},
\end{equation}
for all $n \geq 2$.
\end{Lemma}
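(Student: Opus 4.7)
The plan is to establish the stronger contraction $\phi_i(n\tau)-\phi_j(n\tau)\leq C(M_{n-1}-m_{n-1})$ for every unit vector $v\in\R^d$ and every pair $(i,j)$, with $C\in(0,1)$ independent of everything, where $\phi_l(t):=\langle x_l(t),v\rangle$ and $M_{n-1},m_{n-1}$ are the max and min of $\phi_l(s)$ over $l\in\{1,\dots,N\}$ and $s\in[(n-2)\tau,(n-1)\tau]$. Since $M_{n-1}-m_{n-1}\leq D_{n-1}\leq D_{n-2}$ by Definition~\ref{def} and Remark~\ref{2.3}, taking the supremum over $v$ and $(i,j)$ will then yield \eqref{estimate_diam}.

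Fix $(i,j)$ and, by hypothesis {\bf (CI)}, choose a common influencer $k$ with $\tau_{ik}=\tau_{jk}=:\bar\tau$. Set $\omega_i(t):=\tfrac{1}{N-1}\sum_{l}\chi_{il}a_{il}(t)\leq K$ and peel off the $k$-th term in \eqref{eq1}: for $t\in[(n-1)\tau,n\tau]$ one has $t-\tau_{il}\geq(n-2)\tau$, so Lemma~\ref{2.1} (with $T=(n-1)\tau$) gives $\phi_l(t-\tau_{il})\leq M_{n-1}$ for every $l$. Applying this upper bound only in the sum over $l\neq i,k$ yields the linear inequality
$$
\phi_i'(t)+\omega_i(t)\phi_i(t)\leq \omega_i(t) M_{n-1} - \frac{a_{ik}(t)}{N-1}\bigl(M_{n-1}-\phi_k(t-\bar\tau)\bigr).
$$
Multiplying by $\exp(\int_{(n-1)\tau}^{t}\omega_i)$, integrating on $[(n-1)\tau,n\tau]$, and dropping the nonnegative correction coming from $M_{n-1}-\phi_i((n-1)\tau)\geq 0$, one obtains $\phi_i(n\tau)\leq M_{n-1}-A_i$, where
$$
A_i:=\int_{(n-1)\tau}^{n\tau}e^{-\int_{s}^{n\tau}\omega_i(u)\,du}\,\frac{a_{ik}(s)}{N-1}\bigl(M_{n-1}-\phi_k(s-\bar\tau)\bigr)\,ds\geq 0.
$$

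The perfectly symmetric argument --- using $\phi_l(t-\tau_{jl})\geq m_{n-1}$ in the sum over $l\neq j,k$ --- produces $\phi_j(n\tau)\geq m_{n-1}+B_j$, where $B_j$ has the same form as $A_i$ with $\phi_k(s-\bar\tau)-m_{n-1}$ in place of $M_{n-1}-\phi_k(s-\bar\tau)$ and with $\omega_j,a_{jk}$ in place of $\omega_i,a_{ik}$. Subtracting, and exploiting the crucial fact that the \emph{same} function $\phi_k(\cdot-\bar\tau)$ drives both $i$ and $j$, the telescoping identity $(M_{n-1}-\phi_k(s-\bar\tau))+(\phi_k(s-\bar\tau)-m_{n-1})=M_{n-1}-m_{n-1}$ combined with the lower bound $a_{ik},a_{jk}\geq\psi_0$ from Remark~\ref{2.5} and the upper bound $\omega_i,\omega_j\leq K$ gives
$$
A_i+B_j\geq \frac{\psi_0 e^{-K\tau}}{N-1}\int_{(n-1)\tau}^{n\tau}\bigl(M_{n-1}-m_{n-1}\bigr)\,ds=\frac{\psi_0\tau e^{-K\tau}}{N-1}\bigl(M_{n-1}-m_{n-1}\bigr).
$$
Hence $\phi_i(n\tau)-\phi_j(n\tau)\leq\bigl(1-\tfrac{\psi_0\tau e^{-K\tau}}{N-1}\bigr)(M_{n-1}-m_{n-1})$, and taking the supremum over $v$ and $(i,j)$ yields \eqref{estimate_diam} with $C:=1-\tfrac{\psi_0\tau e^{-K\tau}}{N-1}$ (should this quantity be nonpositive, the very same estimate forces $d(n\tau)=0$, so any $C\in(0,1)$ works trivially).

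The main obstacle is organizing the splitting so that the common-influencer contributions from $\phi_i$ and $\phi_j$ combine into the telescoping identity $(M_{n-1}-\phi_k)+(\phi_k-m_{n-1})=M_{n-1}-m_{n-1}$; once this is in place, the uniform contraction factor emerges cleanly from the lower bound $\psi_0$ on the weights and the upper bound $K$ on $\omega$ via a standard Grönwall argument.
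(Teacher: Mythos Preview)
Your proof is correct and shares the paper's central idea: isolate the common influencer's contribution and exploit the telescoping identity $(M_{n-1}-\phi_k(s-\bar\tau))+(\phi_k(s-\bar\tau)-m_{n-1})=M_{n-1}-m_{n-1}$, which is exactly where the hypothesis $\tau_{ik}=\tau_{jk}$ is used. The organization differs slightly: the paper works directly with the scalar difference $\langle x_i(t)-x_j(t),v\rangle$, derives a single linear differential inequality
\[
\frac{d}{dt}\langle x_i-x_j,v\rangle\le \Bigl(K-\tfrac{\psi_0}{N-1}\Bigr)(M_{n-1}-m_{n-1})-K\langle x_i-x_j,v\rangle,
\]
and applies Gr\"onwall once, obtaining $C=1-\tfrac{\psi_0}{K(N-1)}(1-e^{-K\tau})$. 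You instead bound $\phi_i(n\tau)$ from above and $\phi_j(n\tau)$ from below separately via integrating factors and then subtract, which yields the alternative constant $C=1-\tfrac{\psi_0\tau e^{-K\tau}}{N-1}$. Since $1-e^{-K\tau}\ge K\tau e^{-K\tau}$, the paper's constant is the sharper one, but both lie in $(0,1)$ (indeed $\psi_0\le K$, $N-1\ge 2$, and $x e^{-x}\le e^{-1}$ force $\tfrac{\psi_0\tau e^{-K\tau}}{N-1}\le \tfrac{1}{2e}<1$, so your caveat about nonpositivity is never needed). Your separate-bounds approach is marginally more flexible---it would adapt with no change if the coefficients $\omega_i$ and $\omega_j$ had different upper bounds---while the paper's single-ODE approach is a bit cleaner and produces the better decay rate.
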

\begin{proof}
Notice that if $d(n\tau)=0$, the result is trivial. Suppose $d(n\tau)>0$ and let be $i,j=1,\dots,N$ such that $d(n\tau)= \vert x_i(n\tau)-x_j(n\tau)\vert.$ We set 
    $$ v:=\frac{x_i(n\tau)-x_j(n\tau)}{|x_i(n\tau)-x_j(n\tau)|}.$$
Then, $d(n\tau)=\langle x_i(n\tau)-x_j(n\tau), v \rangle$. Consider,
    $$ M_{n-1} := \max_{i=1,...,N} \max_{s \in [(n-2)\tau,(n-1)\tau]} \langle x_i(s), v \rangle $$
            and 
    $$ m_{n-1}:= \min_{i=1,...,N} \min_{s \in [(n-2)\tau,(n-1)\tau]} \langle x_i(s), v \rangle. $$
It holds that $M_{n-1}-m_{n-1} \leq D_{n-1}$. \\
From \eqref{eq1}, for $t \in [(n-1)\tau,n\tau],$ we can write
\begin{equation}
\begin{split}
& \frac{d}{d t} \langle x_i(t)-x_j(t), v \rangle  = \frac{1}{N-1} \sum_{l:l\neq i} \chi_{il}a_{il}(t) \langle x_l(t-\tau_{il})-x_i(t), v \rangle \\
& \hspace{3.5 cm} - \frac{1}{N-1}\sum_{l:l\neq j} \chi_{jl}a_{jl}(t) \langle x_l(t-\tau_{jl})-x_j(t), v \rangle \\
& = \frac{1}{N-1}\sum_{l:l\neq i} \chi_{il} a_{il}(t) (\langle x_l(t-\tau_{il}), v \rangle - M_{n-1}) \\
& \hspace{3.5 cm} + \frac{1}{N-1}\sum_{l:l\neq i} \chi_{il}a_{il}(t) (M_{n-1}-\langle x_i(t), v \rangle) \\
& - \frac{1}{N-1} \sum_{l:l\neq j} \chi_{jl} a_{jl}(t) (\langle x_l(t-\tau_{jl}), v \rangle - m_{n-1}) \\
& \hspace{3.5 cm}- \frac{1}{N-1}\sum_{l:l\neq j} \chi_{jl}a_{jl}(t) (m_{n-1}-\langle x_j(t), v \rangle) .
\end{split}
\label{eq26} 
\end{equation}
Let us define the sums $S_1$ and $S_2$ and, using Remark \ref{2.5}, we have 
\begin{equation}
\begin{split}
S_1 & := \frac{1}{N-1}\sum_{l:l\neq i}\chi_{il} a_{il}(t) (\langle x_l(t-\tau_{il}), v \rangle - M_{n-1}) \\
& + \frac{1}{N-1} \sum_{l:l\neq i}\chi_{il} a_{il}(t) (M_{n-1}-\langle x_i(t), v \rangle) \\
& \leq \frac{\psi_0}{N-1}  \sum_{l:l\neq i} \chi_{il}(\langle x_l(t-\tau_{il}), v \rangle - M_{n-1}) + K (M_{n-1}-\langle x_i(t), v \rangle),
\end{split}
\label{eq27}
\end{equation}
and
\begin{equation}
\begin{split}
S_2 & := \frac{1}{N-1}\sum_{l:l\neq j} \chi_{jl} a_{jl}(t) (m_{n-1}-\langle x_l(t-\tau_{jl}), v \rangle) \\
& + \frac{1}{N-1} \sum_{l:l\neq j} \chi_{jl} a_{jl}(t) (\langle x_j(t), v \rangle - m_{n-1}) \\
& \leq \frac{\psi_0}{N-1} \sum_{l:l\neq j} \chi_{jl}(m_{n-1}-\langle x_l(t-\tau_{jl}), v \rangle)+ K(\langle x_j(t), v \rangle - m_{n-1}),\\
\end{split}
\label{eq28}
\end{equation}
where we used the fact that, being $t \in [(n-1)\tau,n\tau],$ it holds that  \\ $t -\tau_{ij} \in [(n-2)\tau,n\tau], \ \forall ij=1,\dots,N,$ and then 
\begin{equation*}
m_{n-1} \leq \langle x_l(t-\tau_{il}),v \rangle \leq M_{n-1}, \quad m_{n-1} \leq \langle x_l(t-\tau_{jl}),v \rangle \leq M_{n-1}, 
\end{equation*} 
 for all $l=1,\dots,N.$
Then, using \eqref{eq27} and \eqref{eq28} in \eqref{eq26}, we have that 
\begin{equation*}
\begin{split}
& \frac{d}{d t} \langle x_i(t)-x_j(t), v \rangle  \leq K (M_{n-1}-m_{n-1}) - K \langle x_i(t)-x_j(t), v \rangle \\
& + \frac{\psi_0}{N-1}\sum_{l:l\neq i} \chi_{il}(\langle x_l(t-\tau_{il}), v \rangle - M_{n-1}) \\
& + \frac{\psi_0}{N-1} \sum_{l:l \neq j} \chi_{jl}(m_{n-1} - \langle x_l(t-\tau_{jl}), v \rangle). \\
\end{split}
\end{equation*}
Now, we use the assumption \textbf{(CI)}. Let $x_k,$ for some $k \in \{1,\dots,N\},$ be a  common influencer between $x_i$ and $x_j.$ Using that, for all $l=1, \dots, N, $
$$ \langle x_l(t-\tau_{il}), v \rangle - M_{n-1} \le 0$$
and 
$$  m_{n-1} - \langle x_l(t-\tau_{jl}), v \rangle\le 0,$$
we find that
\begin{equation*}
\begin{split}
& \frac{d}{d t} \langle x_i(t)-x_j(t), v \rangle  \leq K(M_{n-1}-m_{n-1}) - K \langle x_i(t)-x_j(t), v \rangle \\
& +\frac{\psi_0}{N-1} (\langle x_k(t-\tau_{ik}),v \rangle-M_{n-1}+m_{n-1}-\langle x_k(t-\tau_{jk}), v \rangle) \\
& =\left( K - \frac{\psi_0}{N-1}\right)(M_{n-1}-m_{n-1}) - K \langle x_i(t)-x_j(t), v \rangle .
\end{split}  
\end{equation*}
Applying the Grönwall's Lemma in $[(n-1)\tau,t],$ with $t \in [(n-1)\tau,n\tau],$ we find that
\begin{equation*}
\begin{split}
\langle x_i(t)-x_j(t), v \rangle & \leq e^{-K(t-n\tau+\tau)} \langle x_i(n\tau-\tau)-x_j(n\tau-\tau), v \rangle \\
& + \left( 1- \frac{\psi_0}{K(N-1)}\right)(M_{n-1}-m_{n-1}) (1-e^{-K(t-n\tau+\tau)}).
\end{split}
\end{equation*}
Since this is valid $\forall \ t \in [(n-1)\tau,n\tau], $ let consider $t=n\tau.$ Then,
\begin{equation} \label{2.37}
\begin{split} 
d(n&\tau)  \leq e^{-K\tau} \langle x_i(n\tau-\tau)-x_j(n\tau-\tau), v \rangle \\
& \quad\quad + \left( 1- \frac{\psi_0}{K(N-1)}\right)(M_{n-1}-m_{n-1}) (1-e^{-K\tau}) \\
& \leq e^{-K\tau} |x_i(n\tau-\tau)-x_j(n\tau-\tau)| |v| \\
& + \left( 1- \frac{\psi_0}{K(N-1)}\right)(M_{n-1}-m_{n-1}) (1-e^{-K\tau}) \\
& \leq D_{n-1} \left [e^{-K\tau}+1- \frac{\psi_0}{K(N-1)}(1-e^{-K\tau}) \right ] \\
& \leq  D_{n-2} \left [1- \frac{\psi_0}{K(N-1)}(1-e^{-K\tau}) \right ].
\end{split}
\end{equation}
Therefore, \eqref{estimate_diam} follows with
\begin{equation} \label{C}
C:= 1-\frac{\psi_0}{K(N-1)}(1-e^{-K\tau}).
\end{equation} 
\end{proof}
\section{Consensus under Common Influencer assumption} \label{proof}
We are ready to give our convergence to consensus estimate.
\begin{Theorem}\label{main}
    Assume {\bf (CI)}. Then, every solution
$\{x_i(t)\}_{i=1}^N$ to \eqref{eq1}-\eqref{IC} 
 satisfies the exponential decay estimate
    \begin{equation} \label{exp_decay}
        d(t)\leq {D_0}e^{-\gamma (t-2\tau)} \ \mbox{for all} \ t \geq 0,
    \end{equation}
    for a suitable positive constant $\gamma.$
\end{Theorem}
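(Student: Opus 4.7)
The strategy is to derive a geometric decay of the discrete sequence $\{D_n\}_{n \in \N}$ by combining Lemma \ref{2.6} and Lemma \ref{2.7}, and then to use Lemma \ref{2.2} to transfer this decay to $d(t)$ in continuous time.

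The first step is to substitute the bound $d(n\tau) \leq C D_{n-2}$ of Lemma \ref{2.7} (valid for $n \geq 2$) into the recursion \eqref{eq20} of Lemma \ref{2.6}. Exploiting the monotonicity $D_n \leq D_{n-2}$ from Remark \ref{2.3}, I would obtain the three-step contraction
\begin{equation*}
D_{n+1} \;\leq\; e^{-K\tau} C\, D_{n-2} + (1-e^{-K\tau}) D_n \;\leq\; \alpha\, D_{n-2}, \qquad n \geq 2,
\end{equation*}
where $\alpha := e^{-K\tau} C + (1-e^{-K\tau}) = 1 - e^{-K\tau}(1-C) \in (0,1)$ since $C \in (0,1)$ by \eqref{C}.

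The second step is to iterate in the three residue classes modulo $3$. Using the monotonicity $D_1, D_2 \leq D_0$ for the base cases, a straightforward induction gives
\begin{equation*}
D_n \;\leq\; \alpha^{\lfloor n/3 \rfloor}\, D_0 \;\leq\; \alpha^{(n-2)/3}\, D_0, \qquad n \in \N_0.
\end{equation*}

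For the final step, given $t \geq 0$ I would choose $n := \lfloor t/\tau \rfloor + 1$, which satisfies $n\tau - \tau \leq t$, so that Lemma \ref{2.2} yields $d(t) \leq D_n$. Combining with the previous bound and using $\lfloor t/\tau \rfloor \geq t/\tau - 1$, together with the fact that $x \mapsto \alpha^x$ is decreasing, I get
\begin{equation*}
d(t) \;\leq\; D_0\, \alpha^{(t/\tau - 2)/3} \;=\; D_0\, e^{-\gamma(t-2\tau)},
\end{equation*}
with $\gamma := -\ln\alpha/(3\tau) > 0$. For $t < 2\tau$ the right-hand side exceeds $D_0$, which by Remark \ref{2.3} already dominates $d(t)$, so \eqref{exp_decay} holds for every $t \geq 0$.

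The only real obstacle is conceptual and is already hidden in Lemma \ref{2.7}: because the bound on $d(n\tau)$ regresses by two indices to $D_{n-2}$ rather than one, the geometric decay is obtained in blocks of three, which is exactly what produces the factor $1/(3\tau)$ in the rate $\gamma$ and the shift $2\tau$ in the exponent. All the substantive work, namely the contraction coming from the common-influencer assumption, is already packaged into the constant $C$, so the present argument is essentially careful index bookkeeping.
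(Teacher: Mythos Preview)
Your proposal is correct and follows essentially the same route as the paper: you combine \eqref{eq20} with Lemma \ref{2.7} and the monotonicity of $\{D_n\}$ to get $D_{n+1}\le \alpha D_{n-2}$ with $\alpha=1-e^{-K\tau}(1-C)$ (the paper's $\tilde C$), iterate in blocks of three, and then read off the continuous-time bound via Lemma \ref{2.2}. The only differences are cosmetic bookkeeping---you bound $D_n$ for all $n$ via $\alpha^{\lfloor n/3\rfloor}$ and pick $n=\lfloor t/\tau\rfloor+1$, whereas the paper works directly with $D_{3n}$ and locates $t$ in $[3n\tau-\tau,3n\tau+2\tau]$---leading to the identical rate $\gamma=\frac{1}{3\tau}\ln(1/\alpha)$ and shift $2\tau$.
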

\begin{proof}
Let $\left \{x_i(t)\right\}_{i=1}^N$ be the solution of \eqref{eq1}-\eqref{IC}. We claim that 
\begin{equation} \label{claim}
D_{n+1}\leq \tilde{C} D_{n-2}, \ \forall n \geq 2,
\end{equation}
for some constant $\tilde{C} \in (0,1).$ Using \eqref{decr_prop},\eqref{eq20} and \eqref{estimate_diam} we get 
\begin{equation*}
\begin{split}
D_{n+1} & \leq e^{-K\tau}d(n\tau)+(1-e^{-K\tau})D_n \\
& \leq e^{-K\tau}CD_{n-2}+(1-e^{-K\tau})D_n \\
& \leq e^{-K\tau}CD_{n-2}+(1-e^{-K\tau})D_{n-2} \\
& = \left( 1-e^{-K\tau}(1-C)\right)D_{n-2},
\end{split}
\end{equation*}
where $C$ is defined in \eqref{C}. So, setting 
$$ \tilde{C}:=1-e^{-K\tau}(1-C),$$
we have the claim. This implies that
\begin{equation} \label{ineq}
D_{3n} \leq \tilde{C}^n D_0, \ \forall n \geq 1.
\end{equation}
From \eqref{ineq}, we have that 
$$ D_{3n} \leq e^{-3n\gamma \tau}D_0, \ \forall n \in \N_0.$$
where 
$$ \gamma:=\frac{1}{3\tau}\ln \left(\frac{1}{\tilde{C}}\right).$$
Now, fix $i,j=1,\dots,N$ and $t\geq 0.$ Then one can find $t \in [3n\tau-\tau,3n\tau+2\tau]$ for some $n \in \N_0.$ Therefore, using Lemma \ref{2.2}, we find that 
$$ \vert x_i(t)-x_j(t)\vert \leq D_{3n} \leq e^{-3n\gamma \tau}D_0.$$
Thus, being $t \leq 3n\tau+2\tau,$ we get 
$$ \vert x_i(t)-x_j(t)\vert \leq e^{-\gamma t}e^{2\gamma \tau}D_0,$$
and, finally, we find that 
$$ d(t) \leq e^{-\gamma (t-2\tau)}D_0, \quad \forall t \geq 0.$$
So, \eqref{exp_decay} is proved.
\end{proof}

As a particular case of such a model, we can consider a system in which there is a (eventually small) group of  $m$ leaders, $m\in\N.$  These agents influence all other agents in the population, but they are influenced only by the other leaders. 
Let $y_i(t) \in \R^d, i=1,\dots, m,$ be the opinion of the i-th leader at time $t$ and $x_i(t) \in \R^d, \ i=1,\dots, N$ be  the opinion of the i-th non-leader at time $t.$ We assume  a non-universal interaction between the non-leaders. We still consider a time delay in the interaction between the agents to appear as a time needed to discuss and make a decision. Then, the opinions of the population evolve following  the Hegselmann-Krause opinion formation model: 

\begin{equation}\label{eqA}
\begin{array}{l}
\displaystyle{
 \frac{d}{d t}y_i(t)=  \frac{1}{m-1} \sum_{j \neq i}a_{ij}(t)(y_j(t-\tilde\tau_j)-y_i(t)), \quad t>0, \ i=1,...,m,} \\
\displaystyle{ \frac{d}{d t}x_i(t)=  \frac{1}{N+m-1} \sum_{ j \neq i } \chi_{ij}b_{ij}(t)(x_j(t-\tau_{ij})-x_i(t))}\\
\displaystyle{\hspace{3 cm} +  \frac{1}{N+m-1} \sum_{j=1}^{m} c_{ij}(t)(y_j(t-\tilde\tau_j)-x_i(t)),} \\
\displaystyle{ \hspace{7 cm} t>0,\ i=1,...,N,} 
\end{array}
\end{equation}
with the interaction weights $a_{ij}(t), b_{ij}(t), c_{ij}(t) \ t \ge 0,$   defined analogously to  \eqref{a}, namely
\begin{equation} \label{anew}
 a_{ij}(t):=\tilde\psi_{ij}(y_i(t),y_j(t-\tilde\tau_{j})), \quad \  i,j=1,...,m,
\end{equation}
\begin{equation} \label{b}
 b_{ij}(t):=\psi_{ij}(x_i(t),x_j(t-\tau_{ij})), \quad \  i,j=1,...,N,\hspace{1.7 cm}
\end{equation}
\begin{equation} \label{c}
 c_{ij}(t):=\psi^*_{ij}(x_i(t),y_j(t-\tilde\tau_{j})), \quad \  i=1,\dots,m,\ j=1,...,N,
\end{equation}
being the influence functions $\psi, \tilde\psi, \psi^*$ positive, continuous and bounded. 
Let us assume the initial conditions 
\begin{equation} \label{ic1A}
y_i(t)=y_i^0(t), \quad i=1,\dots,m, \ t \in [-\tau,0],
\end{equation}
and 
\begin{equation} \label{ic2A}
x_i(t)=x_i^0(t), \quad i=1,\dots,N, \ t \in [-\tau,0],
\end{equation}
where 
$$\tau=\max\left\{ \max_{i,j=1,\dots,N}{\tau_{ij}, \max_{j=1,\dots,m}\tilde\tau_j}\right\},$$
and
 $y_i^0(\cdot), i=1, \dots, m,$ and $x_i^0(\cdot), i=1, \dots N,$ are continuous functions.

\begin{Remark} \label{example_general}
    Note that system \eqref{eqA} satisfies the assumption \textbf{(CI)} if $m\ge 3.$ Then, in that case, the agents exponentially convergence to an asymptotic consensus.
\end{Remark}

The special cases with only one or two leaders deserve different analyses that will be developed in the next two sections. 

\section{A HK-model with one leader} \label{oneleader}
Here, we will focus on the case in which only one leader, influencing the other agents but not influenced by anyone, is present in the agents' group. First, we consider the leader to have a fixed opinion; then, we will deal with a controlled leader.
\subsection{A unique leader with constant trajectory} \label{one}
In this case, the system reads as: 
\begin{equation}\label{eqD}
\begin{split}
& \frac{d}{d t}y_0(t)= 0, \quad t>0, \\
& \frac{d}{d t}x_i(t)=  \frac{1}{N} \sum_{ j \neq i } \chi_{ij}b_{ij}(t)(x_j(t-\tau_{ij})-x_i(t)) +  \frac{c_{i0}(t)}N(y^0-x_i(t)), \\
& \hspace{7 cm} \quad t>0,\ i=1,...,N, 
\end{split}
\end{equation}
where $b_{ij}(t),$ $i,j=1,\dots,N,$ are defined as in \eqref{b} and $c_{0i}(t)= \phi_{0i}(x_i(t),y^0),$
with 
$\phi_{0i}:\R^d \times \R^d\rightarrow \R$  continuous, positive and bounded.
Let us assume the initial conditions
\begin{equation}\label{ICU}
y_0(0)=y^0 \quad \mbox{\rm and}\quad
x_i(t)= x_i^0(t), \quad t\in [-\tau, 0], \ \forall\ i=1, \dots, N,
\end{equation}
being, as before, $\tau=\max_{i,j=1,\dots,N} \tau_{ij}.$
We want to study the convergence to {consensus} of  system \eqref{eqD}. The \emph{diameter function} for solutions to \eqref{eqD} can be written as
$$d(t):=\max \left\{ \max_{i=1, \dots, N} |x_i(t)-y^0|, \max_{i,j=1, \dots, N}\vert x_i(t)-x_j(t)\vert \right\}.$$

Note that, in this case, assumption \textbf{(CI)} is not satisfied. Indeed, for each $i=1, \dots, N, $ the pair $(x_i, y_0)$ does not admit a common influencer. \\
Let us define the following quantities: 
\begin{equation} \label{Mone}
M_T:= \max \left\{ \max_{j=1,...,N} \max_{s \in [T-\tau,T]}\langle x_j(s), v \rangle, \langle y^0,v\rangle \right\},
\end{equation}
and 
\begin{equation} \label{mone}
m_T:= \min \left\{ \min_{j=1,...,N} \min_{s \in [T-\tau,T]}\langle x_j(s), v \rangle, \langle y^0,v\rangle \right\}.
\end{equation}
Moreover, let us define 
\begin{equation}\label{numero}
\tilde{K}:= \max \left\{ \max_{i,j=1,\dots,N}\Vert \psi_{ij} \Vert_{\infty},  \max_{i=1,\dots,N}\Vert\phi_{0i} \Vert_{\infty} \right\}.
\end{equation}
Similarly to the previous case, we can state the following results.

\begin{Lemma} \label{4.1}
Let $\Big (\{x_i(t)\}_{i=1}^N,y_0\Big )$ be a solution to \eqref{eqD}-\eqref{ICU}. Then, for each $v \in \R^d$ and $T \geq 0 $, we have 
\begin{equation} \label{eq4}
m_T \leq \langle x_i(t), v \rangle \leq M_T, 
\end{equation}
for all $t\ge T-\tau,$ $i=1, \dots, N.$ 
\end{Lemma}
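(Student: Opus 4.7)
My plan is to prove Lemma \ref{4.1} by the classical invariant-box argument for delay consensus dynamics. The assertion for $s\in[T-\tau,T]$ is built into the definitions of $M_T$ and $m_T$, so I only need to establish the two one-sided bounds on $[T,\infty)$; moreover the lower bound follows from the upper one by replacing $v$ with $-v$, so I will concentrate on showing $\langle x_i(t),v\rangle\le M_T$ for all $i$ and $t\ge T$.

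Assuming by contradiction that the upper bound fails, I would fix $\epsilon>0$ small enough that $\max_i\langle x_i(t^{**}),v\rangle>M_T+\epsilon$ at some $t^{**}>T$, and define the first-crossing time
$$\hat t_\epsilon:=\inf\bigl\{t\ge T\,:\,\max_{i}\langle x_i(t),v\rangle>M_T+\epsilon\bigr\}.$$
Continuity of the trajectories together with $\max_i\langle x_i(s),v\rangle\le M_T<M_T+\epsilon$ on $[T-\tau,T]$ yields $T<\hat t_\epsilon\le t^{**}<\infty$ and $\max_i\langle x_i(\hat t_\epsilon),v\rangle=M_T+\epsilon$. I fix some $i_0$ attaining this equality; by minimality of $\hat t_\epsilon$ one has $\langle x_j(s),v\rangle\le M_T+\epsilon$ for every $j$ and every $s\in[T-\tau,\hat t_\epsilon]$, while $\langle y^0,v\rangle\le M_T$. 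Pairing \eqref{eqD} with $v$ at $t=\hat t_\epsilon$, every non-leader summand $\chi_{i_0 j}b_{i_0 j}(\hat t_\epsilon)\langle x_j(\hat t_\epsilon-\tau_{i_0 j})-x_{i_0}(\hat t_\epsilon),v\rangle$ is non-positive, while the leader term is bounded by $-c_{i_0 0}(\hat t_\epsilon)\epsilon/N$. Since $\phi_{0 i_0}$ is continuous and strictly positive on the compact range provided by Lemma \ref{2.4}, there exists $\phi_0>0$ (the analogue of Remark \ref{2.5} for this system) with $c_{i_0 0}(\hat t_\epsilon)\ge\phi_0$, and consequently
$$\frac{d}{dt}\langle x_{i_0}(t),v\rangle\bigg|_{t=\hat t_\epsilon}\le -\frac{\phi_0\epsilon}{N}<0.$$

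Applying the same estimate to every index in the maximal set $\{i:\langle x_i(\hat t_\epsilon),v\rangle=M_T+\epsilon\}$, and using bare continuity for the other indices (which lie strictly below $M_T+\epsilon$ at $\hat t_\epsilon$), I obtain $\eta>0$ with $\max_i\langle x_i(t),v\rangle<M_T+\epsilon$ on $(\hat t_\epsilon,\hat t_\epsilon+\eta]$. This contradicts the infimum definition of $\hat t_\epsilon$, which forces times arbitrarily close above $\hat t_\epsilon$ at which $\max_i\langle x_i(t),v\rangle>M_T+\epsilon$. Hence $\hat t_\epsilon=+\infty$ for every $\epsilon>0$; letting $\epsilon\to 0^+$ yields the desired bound. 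I expect the main obstacle to lie in obtaining the \emph{strict} negative drift at the first crossing: the delayed non-leader sum is only $\le 0$, so strictness has to come from the constant leader sitting strictly below the barrier $M_T+\epsilon$, combined with the uniform positive lower bound on $c_{i_0 0}$. This is also the reason the $\epsilon$-regularisation cannot be avoided, since a direct argument at the level $M_T$ would give only a non-strict inequality: $\langle y^0,v\rangle$ may equal $M_T$ and the delayed terms may saturate simultaneously.
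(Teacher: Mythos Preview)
Your argument is essentially sound. The paper itself does not prove this lemma; it refers back to Lemma~\ref{2.1}, whose proof is in turn deferred to \cite{CPM2A}, where the standard $\epsilon$-regularised invariant-region argument via Gr\"onwall is used. Your first-crossing approach is a legitimate variant of that scheme.

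One slip deserves mention. When you appeal to ``the compact range provided by Lemma~\ref{2.4}'' (really its one-leader analogue, Lemma~\ref{4.4}) in order to obtain a uniform lower bound $\phi_0>0$ for $c_{i_0 0}$, you are invoking a result stated \emph{after} Lemma~\ref{4.1} and whose proof is customarily derived \emph{from} Lemma~\ref{4.1} (take $T=0$ and let $v$ run over a basis). This is circular. The repair is immediate: at the single time $\hat t_\epsilon$ you only need $c_{i_0 0}(\hat t_\epsilon)=\phi_{0 i_0}(x_{i_0}(\hat t_\epsilon),y^0)>0$, which follows directly from the standing hypothesis that $\phi_{0 i_0}$ is strictly positive everywhere---no compactness or uniform bound is needed for the finitely many maximal indices at a fixed instant.

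By contrast, the Gr\"onwall route the paper implicitly invokes avoids the issue entirely: on $[T,\hat t_\epsilon]$ one has $\tfrac{d}{dt}\langle x_i,v\rangle\le\tilde K\bigl(M_T+\epsilon-\langle x_i,v\rangle\bigr)$ using only the upper bound $\tilde K$ on the weights, and integrating from $T$ yields $\langle x_i(t),v\rangle\le M_T+\epsilon\bigl(1-e^{-\tilde K(t-T)}\bigr)<M_T+\epsilon$, contradicting the definition of $\hat t_\epsilon$ without any call on the positivity of the leader coupling. Your device of extracting strict negativity from the leader term is a nice alternative that exploits the specific structure of \eqref{eqD}, but it is not the mechanism the referenced proof relies on.
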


\begin{Remark}
Notice that, from the definition \eqref{Mone} and \eqref{mone}, it is trivial that 
$$ m_T \leq \langle y^0,v \rangle \leq M_T.$$
\end{Remark}

Let us specify the notation in Definition \ref{def} for this case.
\begin{Definition} \label{defU}
We define 
\begin{equation}\label{D0U}
\begin{array}{l}
\displaystyle{ D_0:=\max \Big\{ \max_{i,j=1,\dots,N} \max_{s,t \in [-\tau,0]} \vert x_i(s)-x_j(t) \vert,} \\ 
\displaystyle{ \hspace{4 cm} \max_{i=1,\dots,N} \max_{s \in [-\tau,0]} \vert x_i(s)-y^0 \vert \Big \}, }
\end{array}
\end{equation}
and in general for all $n \in \N,$
\begin{equation}\label{Dn}
\begin{array}{l}
\displaystyle{ D_n:=\max \Big\{ \max_{i,j=1,\dots,N} \max_{s,t \in [n\tau-\tau,n\tau]} \vert x_i(s)-x_j(t) \vert,}  \\
\displaystyle{ \hspace{4 cm}\max_{i=1,\dots,N} \max_{s \in [n\tau-\tau,n\tau]} \vert x_i(s)-y^0 \vert \Big\}}.
\end{array}
\end{equation}
\end{Definition}
As before, we need preliminary estimates in order to prove the consensus result.
\begin{Lemma} \label{4.2}
Let $\Big (\{x_i(t)\}_{i=1}^N,y_0\Big )$ be a solution to \eqref{eqD}-\eqref{ICU}.
For each $n \in \N_0$ and $\forall \ i,j=1,...,N,$ we get
\begin{equation}\label{eq12U}
|x_i(t)-x_j(t)| \leq D_n, \quad \forall \ t \geq n\tau-\tau,\
\end{equation}
and, similarly, $\forall i=\dots,N,$
\begin{equation}\label{eq12U1}
|x_i(t)-y^0| \leq D_n, \quad \forall \ t \geq n\tau-\tau.\
\end{equation}
\end{Lemma}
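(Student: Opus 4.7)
The plan is to mirror the proof of Lemma~\ref{2.2}, adapting only the envelope to include the constant leader position $y^0$. The essential input is Lemma~\ref{4.1}, which already encodes the invariance of the combined min/max envelope $[m_T, M_T]$ along the solution.

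First, I fix $n \in \N_0$ and apply Lemma~\ref{4.1} with $T = n\tau$. For an arbitrary unit vector $v \in \R^d$ this yields, for every $t \geq n\tau - \tau$ and every $i = 1, \dots, N$,
$$
m_{n\tau} \leq \langle x_i(t), v \rangle \leq M_{n\tau}.
$$
Since $\langle y^0, v \rangle$ lies in $[m_{n\tau}, M_{n\tau}]$ by the very definitions \eqref{Mone}--\eqref{mone}, taking differences gives simultaneously
$\langle x_i(t) - x_j(t), v \rangle \leq M_{n\tau} - m_{n\tau}$ and $\langle x_i(t) - y^0, v \rangle \leq M_{n\tau} - m_{n\tau}$.

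Second, to upgrade these scalar estimates into norm estimates, for fixed $i,j$ and $t \geq n\tau - \tau$ with $x_i(t)\neq x_j(t)$ I choose $v = (x_i(t) - x_j(t))/|x_i(t) - x_j(t)|$ and obtain $|x_i(t) - x_j(t)| \leq M_{n\tau} - m_{n\tau}$; when $x_i(t) = x_j(t)$ the bound is trivial. The analogous choice $v = (x_i(t) - y^0)/|x_i(t) - y^0|$ yields $|x_i(t) - y^0| \leq M_{n\tau} - m_{n\tau}$.

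Third, it remains to observe that $M_{n\tau} - m_{n\tau} \leq D_n$. By definition, each of $M_{n\tau}$ and $m_{n\tau}$ is realized either as some $\langle x_k(s), v \rangle$ with $s \in [n\tau - \tau, n\tau]$ or as $\langle y^0, v \rangle$, so $M_{n\tau} - m_{n\tau}$ is either of the form $\langle x_k(s) - x_l(s'), v\rangle$, $\langle x_k(s) - y^0, v\rangle$, or $\langle y^0 - x_l(s'), v\rangle$. By Cauchy--Schwarz and $|v|=1$, each is bounded by the corresponding maximum appearing in Definition~\ref{defU}, hence by $D_n$. Combining the three steps gives \eqref{eq12U} and \eqref{eq12U1}.

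There is no real obstacle: the argument is a direct transcription of Lemma~\ref{2.2}, the only new feature being that the envelope $[m_{n\tau}, M_{n\tau}]$ now incorporates the fixed value $\langle y^0, v\rangle$. The mild bookkeeping is checking that the three-case split for $M_{n\tau} - m_{n\tau}$ is covered by the enlarged diameter $D_n$, which is immediate from Definition~\ref{defU}.
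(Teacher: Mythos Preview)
Your proof is correct and follows exactly the approach the paper has in mind: the paper omits the proof of Lemma~\ref{4.2} as analogous to Lemma~\ref{2.2} (itself deferred to \cite{CPM2A}), and your argument---using the scalar envelope from Lemma~\ref{4.1}, choosing the unit vector $v$ to recover norms, and bounding $M_{n\tau}-m_{n\tau}$ by $D_n$ via Definition~\ref{defU}---is precisely that analogue. The only cosmetic omission is the trivial fourth case $M_{n\tau}=m_{n\tau}=\langle y^0,v\rangle$, where the difference is $0$.
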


Let us note that, as in Remark \ref{2.3}, Lemma \ref{4.2} implies that $\{ D_n \}_{n \in \N_0}$ is a non-increasing sequence.

\begin{Lemma} \label{4.4}
Let $\Big (\{x_i(t)\}_{i=1}^N,y_0\Big )$ be a solution to \eqref{eqD}-\eqref{ICU}.
For all $i=1,...,N,$ we have
\begin{equation}
|x_i(t)| \leq C_0, \quad \forall \ t \geq 0,
\label{eq16U}
\end{equation}
and, in particular,
\begin{equation}
|y^0| \leq C_0, \quad \forall \ t \geq 0,
\label{eq16U1}
\end{equation}
where
$$ C_0:=\max \left\{ \max_{i=1,...,N} \max_{s \in [-\tau,0]} |x_i(0)|, |y^0| \right\}.$$
\end{Lemma}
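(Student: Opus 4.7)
The plan is to mirror the argument of Lemma 2.4 (i.e.\ Lemma 2.5 of \cite{CPM2A}), with the only modification being that the set over which one maximizes a linear functional now also contains the leader's opinion $y^0$. The estimate \eqref{eq16U1} is trivial: the first equation of \eqref{eqD} forces $y_0(t)\equiv y^0$, so $|y^0|\le C_0$ follows directly from the definition of $C_0$.

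For \eqref{eq16U}, fix $i\in\{1,\dots,N\}$ and $t\ge 0$. If $x_i(t)=0$ the bound is trivial, so assume $x_i(t)\ne 0$ and choose the unit vector $v:=x_i(t)/|x_i(t)|$. Applying Lemma \ref{4.1} with $T=0$ (noting $t\ge 0 \ge T-\tau$) yields
$$
|x_i(t)| \;=\; \langle x_i(t),v\rangle \;\le\; M_0,
$$
where, by \eqref{Mone},
$$
M_0 = \max\Big\{\max_{j=1,\dots,N}\max_{s\in[-\tau,0]}\langle x_j(s),v\rangle,\ \langle y^0,v\rangle\Big\}.
$$

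By Cauchy--Schwarz and $|v|=1$, each term in the max is bounded either by $\max_j\max_{s\in[-\tau,0]}|x_j(s)|$ or by $|y^0|$, and both of these are $\le C_0$ by definition. Hence $M_0\le C_0$, which gives \eqref{eq16U}. There is no real obstacle here; the only point requiring attention is that $C_0$ must be taken as the maximum of the leader's norm and of the non-leaders' initial data, which is precisely how it is defined.
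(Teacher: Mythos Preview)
Your argument is correct and follows exactly the approach the paper intends: Lemma \ref{4.4} is stated without proof, the implicit reference being the analogous Lemma \ref{2.4} (itself deferred to \cite[Lemma 2.5]{CPM2A}), and your proof---apply the scalar bound of Lemma \ref{4.1} with $T=0$ to the unit vector $v=x_i(t)/|x_i(t)|$, then use Cauchy--Schwarz to bound $M_0\le C_0$---is precisely that argument, adapted to include the leader term $\langle y^0,v\rangle$ in the maximum. Nothing further is needed.
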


\begin{Remark} \label{4.5}
As in Remark \ref{2.5}, from Lemma \ref{4.4}, since the influence functions $\psi_{ij}$ and $\phi_{0i}$ are continuous and positive, we deduce that
\begin{equation} \label{lower_boundU}
\psi_{ij}(x_i(t),x_j(t-\tau_{ij})) \geq \psi_0:= \min_{i,j=1, \dots, N}\min_{|z_1|,|z_2| \leq C_0} \psi_{ij}(z_1,z_2) >0  , 
\end{equation}
for each $t \geq 0 $ and $i,j=1,...,N,$ and 
\begin{equation} \label{lower_boundU1}
\phi_{0i}(x_i(t),y^0) \geq \phi_0:= \min_{i=1, \dots, N}\min_{|z_1|,|z_2| \leq C_0} \phi_{0i}(z_1,z_2) >0  , 
\end{equation}
for each $t \geq 0 $ and $i=1,...,N$.
\end{Remark}
Let us denote
\begin{equation}\label{min_val}
\tilde{\psi}_0:=\min \{\psi_0, \phi_0\}.
\end{equation}

\begin{Lemma} \label{4.6}
Let $\Big (\{x_i(t)\}_{i=1}^N,y_0\Big )$ be a solution to \eqref{eqD}-\eqref{ICU}.
For all $i,j=1,\dots,N,$ unit vector $ v \in \R^d$ and $n \in \N_0$ we have that 
\begin{equation} \label{eq18one}
\langle x_i(t)-x_j(t),v \rangle \leq e^{-\tilde K(t-t_0)} \langle x_i(t_0)-x_j(t_0),v \rangle + (1-e^{-\tilde K(t-t_0)})D_n,
\end{equation}
for all $t \geq t_0 \geq n\tau, \ i,j=1,\dots,N,$ and 
\begin{equation}
\langle x_i(t)-y^0,v \rangle \leq e^{-\tilde K(t-t_0)} \langle x_i(t_0)-y^0,v \rangle + (1-e^{-\tilde K(t-t_0)})D_n,
\label{eq18U1}
\end{equation}
for all $t \geq t_0 \geq n\tau.$ 
Moreover, \eqref{eq20} holds true for all $n \in \N_0.$
\end{Lemma}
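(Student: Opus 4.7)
The plan is to adapt the argument of Lemma \ref{2.6} to the present setting, the new ingredient being that the constant leader value $\langle y^0, v\rangle$ must be absorbed into the bounds alongside the $\langle x_i(t), v\rangle$. Fix a unit vector $v \in \R^d$ and $n \in \N_0$, and set
$$ M_n := \max\Big\{ \max_{i=1,\dots,N} \max_{s\in[n\tau-\tau,n\tau]} \langle x_i(s), v\rangle,\ \langle y^0, v\rangle\Big\}, $$
$$ m_n := \min\Big\{ \min_{i=1,\dots,N} \min_{s\in[n\tau-\tau,n\tau]} \langle x_i(s), v\rangle,\ \langle y^0, v\rangle\Big\}, $$
so that by Definition \ref{defU} we have $M_n - m_n \leq D_n$. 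By Lemma \ref{4.1} together with the remark that $\langle y^0, v\rangle \in [m_n, M_n]$, for every $t \geq t_0 \geq n\tau$, every $j$ and every $l$ we have $\langle x_j(t-\tau_{jl}), v\rangle \leq M_n$ and $\langle y^0, v\rangle \leq M_n$ (with the analogous lower bounds).

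Then, from the second equation in \eqref{eqD} applied to $\langle x_i(t), v\rangle$, the upper-bound side of the computation yields
$$ \frac{d}{dt}\langle x_i(t), v\rangle \leq \frac{1}{N}\Big[\sum_{j\neq i}\chi_{ij} b_{ij}(t) + c_{i0}(t)\Big]\big(M_n - \langle x_i(t), v\rangle\big), $$
and the bracket is bounded by $\tilde K$ via \eqref{numero}, since $\sum_{j\neq i}\chi_{ij} \leq N-1$. An application of Gr\"onwall's Lemma gives
$$ \langle x_i(t), v\rangle \leq e^{-\tilde K(t-t_0)}\langle x_i(t_0), v\rangle + (1-e^{-\tilde K(t-t_0)})M_n, $$
and the symmetric lower estimate holds with $m_n$ in place of $M_n$. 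Subtracting the two estimates (applied to the indices $i$ and $j$ respectively) yields \eqref{eq18one}. For \eqref{eq18U1}, we use the upper estimate above and the trivial identity $\langle y^0, v\rangle = e^{-\tilde K(t-t_0)}\langle y^0, v\rangle + (1-e^{-\tilde K(t-t_0)})\langle y^0, v\rangle$, since $y^0$ is constant in time; subtracting and noting $M_n - \langle y^0, v\rangle \leq M_n - m_n \leq D_n$ produces exactly \eqref{eq18U1}.

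Finally, \eqref{eq20} follows from these two estimates by the same argument used at the end of the proof of Lemma \ref{2.6} in \cite{CPM2A}: take $t_0 = n\tau$ and $t = (n+1)\tau$ in \eqref{eq18one} and \eqref{eq18U1}, then maximize over the choice of $i,j$ (and over the choice of $v$ realizing the relevant diameters on $[(n+1)\tau-\tau,(n+1)\tau]$), using $\langle x_i(n\tau)-x_j(n\tau), v\rangle \leq d(n\tau)$ and $\langle x_i(n\tau)-y^0, v\rangle \leq d(n\tau)$. The only mildly delicate point, and therefore the main obstacle, is to ensure that the combined weight $\sum_{j\neq i}\chi_{ij}b_{ij}(t)/N + c_{i0}(t)/N$ does not exceed $\tilde K$; the choice of normalization $1/N$ (rather than $1/(N-1)$) in \eqref{eqD} together with the definition of $\tilde K$ in \eqref{numero} is exactly what makes this work.
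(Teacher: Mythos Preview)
Your proof is correct and follows essentially the same approach as the paper's: define $M_n$ and $m_n$ to include the leader value, obtain the one-sided Gr\"onwall bounds on $\langle x_i(t),v\rangle$, and subtract. The only cosmetic difference is in deriving \eqref{eq18U1}: the paper applies Gr\"onwall to the trivial inequality $\frac{d}{dt}\langle y^0,v\rangle=0\ge \tilde K(m_n-\langle y^0,v\rangle)$ to get a lower bound on $\langle y^0,v\rangle$ and then subtracts, whereas you use the (equivalent, slightly more direct) algebraic identity $\langle y^0,v\rangle=e^{-\tilde K(t-t_0)}\langle y^0,v\rangle+(1-e^{-\tilde K(t-t_0)})\langle y^0,v\rangle$ together with $M_n-\langle y^0,v\rangle\le M_n-m_n\le D_n$.
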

\begin{proof} The estimate \eqref{eq18one} can be proved analogously to \eqref{eq18}. So, let us prove \eqref{eq18U1}.
Let us fix $v \in \R^d$ such that $|v|=1$ and $n \in \N_0.$ We denote by
$$ M_n := \max \left\{ \max_{i=1,...,N} \max_{t \in [n\tau-\tau,n\tau]} \langle x_i(t), v \rangle, \langle y^0,v \rangle \right\}$$
and 
$$ m_n:= \min \left\{ \min_{i=1,...,N} \min_{t \in [n\tau-\tau, n\tau]} \langle x_i(t), v \rangle, \langle y^0,v \rangle \right\}.$$
Note that $M_n-m_n \leq D_n.$ Now, fix $i=1,\dots,N$ and $t \geq t_0 \geq n\tau.$ Then, as before, we have that
\begin{equation*}
\begin{split}
\frac{d}{d t} \langle x_i(t),v \rangle & = \frac{1}{N} \sum_{j \neq i} \chi_{ij} b_{ij}(t) \langle x_j(t-\tau_{ij})-x_i(t),v \rangle + \frac{c_{i0}(t)}{N}\langle y^0-x_i(t),v\rangle \\
& \leq  \frac{1}{N} (N-1)\tilde{K} \left( M_n -\langle x_i(t),v \rangle \right)
+ \frac{\tilde{K}}{N}\left(M_n-\langle x_i(t),v\rangle\right) \\
& \leq \tilde{K}(M_n - \langle x_i(t),v \rangle ).
\end{split}
\end{equation*}
By applying the Grönwall's Lemma, we find that 
\begin{equation}\label{eq21U}
\langle x_i(t),v \rangle \leq e^{-\tilde{K}(t-t_0)}  \langle x_i(t_0),v \rangle + (1-e^{-\tilde{K}(t-t_0)})M_n .
\end{equation}
On the other hand, notice that holds true that
\begin{equation*}
\frac{d}{d t} \langle y^0,v \rangle = 0\geq \tilde{K}(m_n - \langle y^0,v \rangle). 
\end{equation*}
Then, applying again the Grönwall's Lemma, 
\begin{equation}\label{eq22U}
\langle y^0,v \rangle \geq e^{-\tilde{K}(t-t_0)}  \langle y^0,v \rangle + (1-e^{-\tilde{K}(t-t_0)})m_n .
\end{equation}
From \eqref{eq21U} and \eqref{eq22U}, we have find \eqref{eq18U1}.
\end{proof}

Finally, we have the following result.
\begin{Lemma} \label{4.7}
Let $\Big (\{x_i(t)\}_{i=1}^N,y_0\Big )$ be a solution to \eqref{eqD}-\eqref{ICU}.
There exists a constant $C \in (0,1)$ such that 
\begin{equation} \label{estimate_diamU}
d(n\tau) \leq C D_{n-2},
\end{equation}
for all $n \geq 2$.
\end{Lemma}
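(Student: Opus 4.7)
The plan is to mimic the proof of Lemma \ref{2.7} by splitting the argument according to which pair realizes the diameter at $t=n\tau$: either (a) $d(n\tau)=|x_i(n\tau)-x_j(n\tau)|$ for some $i,j\in\{1,\dots,N\}$, or (b) $d(n\tau)=|x_i(n\tau)-y^0|$ for some $i$. In each case I take $v$ to be the unit vector along the realizing difference and I let $M_{n-1}$, $m_{n-1}$ be the extrema of $\langle\cdot,v\rangle$ over $\{x_l(s):s\in[(n-2)\tau,(n-1)\tau]\}\cup\{y^0\}$, so that $M_{n-1}-m_{n-1}\le D_{n-1}$ and $m_{n-1}\le\langle y^0,v\rangle\le M_{n-1}$ by construction.

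In Case (a), the key observation is that even though assumption \textbf{(CI)} fails for pairs involving $y^0$, the constant leader $y^0$ automatically serves as a common influencer for every pair of non-leaders through the two coupling coefficients $c_{i0}/N,\,c_{j0}/N\ge\tilde\psi_0/N$ in \eqref{eqD}. Applying the same add-and-subtract decomposition as in \eqref{eq26}--\eqref{eq28} to both the non-leader sums and the two leader contributions $\tfrac{c_{i0}}{N}(\langle y^0,v\rangle-\langle x_i,v\rangle)$ and $-\tfrac{c_{j0}}{N}(\langle y^0,v\rangle-\langle x_j,v\rangle)$ produces the contracting summand
\[
\frac{\tilde\psi_0}{N}\bigl(\langle y^0,v\rangle-M_{n-1}\bigr)+\frac{\tilde\psi_0}{N}\bigl(m_{n-1}-\langle y^0,v\rangle\bigr)=-\frac{\tilde\psi_0}{N}(M_{n-1}-m_{n-1}),
\]
which plays exactly the role that the common influencer $x_k$ plays in Lemma \ref{2.7}. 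Grönwall on $[(n-1)\tau,n\tau]$ followed by the monotonicity $D_{n-1}\le D_{n-2}$ then gives $d(n\tau)\le CD_{n-2}$ with
\[
C:=1-\frac{\tilde\psi_0}{\tilde K N}\bigl(1-e^{-\tilde K\tau}\bigr)\in(0,1).
\]

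In Case (b), since $\tfrac{d}{dt}\langle y^0,v\rangle=0$ I would differentiate only $\langle x_i(t)-y^0,v\rangle$, so the contraction must come solely from the direct coupling $c_{i0}(t)/N$. Splitting $\langle y^0,v\rangle-\langle x_i(t),v\rangle=(\langle y^0,v\rangle-M_{n-1})+(M_{n-1}-\langle x_i(t),v\rangle)$ and then rewriting $M_{n-1}-\langle x_i(t),v\rangle=(M_{n-1}-\langle y^0,v\rangle)-\langle x_i(t)-y^0,v\rangle$ to absorb the drift into the Grönwall unknown leads to
\[
\frac{d}{dt}\langle x_i(t)-y^0,v\rangle\le\Bigl(\tilde K-\frac{\tilde\psi_0}{N}\Bigr)(M_{n-1}-\langle y^0,v\rangle)-\tilde K\langle x_i(t)-y^0,v\rangle,
\]
and since $M_{n-1}-\langle y^0,v\rangle\le D_{n-1}$ the same Grönwall estimate on $[(n-1)\tau,n\tau]$ returns the same contraction constant $C$.

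The main obstacle I anticipate is Case (b): the symmetric $S_1$--$S_2$ structure exploited in Lemma \ref{2.7} is unavailable when one endpoint of the realizing pair is the static leader, so the asymmetric argument above, which relies on the constancy $\tfrac{d}{dt}\langle y^0,v\rangle=0$ together with the single direct coupling $c_{i0}/N\ge\tilde\psi_0/N$, is what must do the work. Once both cases return the same factor $C$, the estimate \eqref{estimate_diamU} follows.
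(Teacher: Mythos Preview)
Your proposal is correct and follows essentially the same route as the paper: the same case split on which pair realizes $d(n\tau)$, with $y^0$ playing the role of common influencer for the two non-leaders in Case~(a), and direct exploitation of the coupling $c_{i0}/N\ge\tilde\psi_0/N$ in Case~(b), leading via Gr\"onwall on $[(n-1)\tau,n\tau]$ to the identical constant $C=1-\frac{\tilde\psi_0}{\tilde K N}(1-e^{-\tilde K\tau})$. In Case~(b) your rewriting produces the slightly sharper intermediate factor $M_{n-1}-\langle y^0,v\rangle$, whereas the paper inserts the zero $c_{i0}(t)(\langle y^0,v\rangle-m_{n-1}+m_{n-1}-\langle y^0,v\rangle)$ to obtain $M_{n-1}-m_{n-1}$; both collapse to the same final bound after using $M_{n-1}-m_{n-1}\le D_{n-1}\le D_{n-2}$.
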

\begin{proof}
Let us assume that $d(n\tau)= \vert x_i(n\tau)- y^0\vert$ for a given $i=1,\dots,N.$ We set 
    $$ v:=\frac{x_i(n\tau)-y^0}{|x_i(n\tau)-y^0|}.$$
Then, $d(n\tau)=\langle x_i(n\tau)-y^0, v \rangle$. Consider,
    $$ M_{n-1} := \max \left\{\max_{i=1,...,N} \max_{s \in [(n-2)\tau,(n-1)\tau]} \langle x_i(s), v \rangle, \langle y^0,v \rangle \right\} $$
            and 
    $$ m_{n-1}:= \min \left\{ \min_{i=1,...,N} \min_{s \in [(n-2)\tau,(n-1)\tau]} \langle x_i(s), v \rangle, \langle y^0,v \rangle \right\}. $$
It holds that $M_{n-1}-m_{n-1} \leq D_{n-1}$. \\
From \eqref{eqD}, for $t \in [(n-1)\tau,n\tau],$ we can write
\begin{equation*}
\begin{split}
& \frac{d}{d t} \langle x_i(t)-y^0, v \rangle  = \frac{1}{N} \sum_{j\neq i} \chi_{ij}b_{ij}(t) \langle x_j(t-\tau_{ij})-x_i(t), v \rangle + \frac{c_{i0}(t)}{N} \langle y^0-x_i(t), v \rangle \\
& = \frac{1}{N} \sum_{j\neq i} \chi_{ij}b_{ij}(t) \left(\langle x_j(t-\tau_{ij}),v \rangle - M_{n-1} \right) + \frac{1}{N} \sum_{j\neq i} \chi_{ij}b_{ij}(t) \left(M_{n-1}-x_i(t), v \rangle \right) \\
& \hspace{3.5 cm} + \frac{c_{i0}(t)}{N} \left(\langle y^0,v \rangle - M_{n-1}\right)  + \frac{c_{i0}(t)}{N} \left(M_{n-1}- x_i(t), v \rangle \right) \\
& \leq \frac{N-1}{N}\tilde{K}(M_{n-1}-\langle x_i(t), v \rangle) + \frac{\tilde{K}}{N}(M_{n-1}-\langle x_i(t), v \rangle)- \frac{\tilde{\psi_0}}{N} (\langle y^0, v \rangle - M_{n-1}). 
\end{split}
\end{equation*}
Then, 
\begin{equation}
\begin{split}
\frac{d}{d t} \langle x_i(t)-y^0, v \rangle  & \le \tilde{K}(M_{n-1}-\langle x_i(t), v \rangle)+ \frac{\tilde{\psi_0}}{N}(\langle y^0, v \rangle-M_{n-1}) \\
& \hspace{1.3 cm} + c_{i0}(t)(\langle y^0, v \rangle-m_{n-1}+m_{n-1}-\langle y^0, v \rangle) \\
& \leq \tilde{K}(M_{n-1}-\langle x_i(t), v \rangle)+ \frac{\tilde{\psi_0}}{N}(\langle y^0, v \rangle-M_{n-1}) \\
& \hspace{1.3 cm} + \tilde{K}(\langle y^0,v \rangle -m_{n-1})+\tilde{\psi_0}(m_{n-1}-\langle y^0,v\rangle) \\
& \leq \tilde{K}(M_{n-1}-\langle x_i(t), v \rangle)+ \frac{\tilde{\psi_0}}{N}(\langle y^0, v \rangle-M_{n-1}) \\
& \hspace{1.3 cm} + \tilde{K}(\langle y^0,v \rangle -m_{n-1})+\frac{\tilde{\psi_0}}{N}(m_{n-1}-\langle y^0,v\rangle) \\
& \leq \left(\tilde{K}-\frac{\tilde{\psi_0}}{N}\right)(M_{n-1}-m_{n-1})-\tilde{K}\langle x_i(t)-y^0,v \rangle,
\end{split}
\label{eq26U} 
\end{equation}
where we used the fact that $m_{n-1}-\langle y^0,v\rangle \leq 0.$
\\Notice that, repeating the argument in the proof of Lemma \ref{2.7}, from the estimate above, applying the Grönwall's Lemma, we can obtain a similar estimate as \eqref{2.37}. Finally, assuming that $d(n\tau)=|x_i(n\tau)-x_j(n\tau)|,$ for some $i,j=1,\dots,N,$ and using similar steps as above, we can find that 
$$ \frac{d}{d t} \langle x_i(t)-x_j(t),v \rangle \leq \left(\tilde{K}-\frac{\tilde{\psi_0}}{N}\right)(M_{n-1}-m_{n-1})-\tilde{K}\langle x_i(t)-x_j(t),v \rangle.$$ 
Therefore, we have the result with 
\begin{equation} \label{CU}
C:= 1-\frac{\tilde{\psi_0}}{\tilde{K}N}(1-e^{-\tilde{K}\tau}).
\end{equation}
\end{proof}
Then, the exponential consensus result follows arguing as in previous section.

\begin{Theorem}\label{main_1leaderfermo}
   Every solution
 $\Big (\{x_i(t)\}_{i=1}^N,y_0\Big )$  to \eqref{eqD}-\eqref{ICU}
 satisfies the exponential decay estimate
    \begin{equation} \label{exp_decay_1leaderfermo}
        d(t)\leq {D_0}e^{-\tilde\gamma (t-2\tau)},\quad \forall \ t \geq 0,
    \end{equation}
    for a suitable positive constant $\tilde\gamma.$
\end{Theorem}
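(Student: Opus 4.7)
The plan is to mimic the proof of Theorem \ref{main} essentially verbatim, using the one-leader analogues of the key ingredients: the monotonicity of $\{D_n\}$ (Lemma \ref{4.2}), the inequality \eqref{eq20} (which Lemma \ref{4.6} asserts still holds), and the one-step contraction $d(n\tau)\le C D_{n-2}$ from Lemma \ref{4.7} with the constant $C$ defined in \eqref{CU}. The extended definition of $d(t)$ (which now includes the distances $|x_i(t)-y^0|$) and of $D_n$ poses no new difficulty because Lemmas \ref{4.2}, \ref{4.6}, \ref{4.7} have been formulated to cover both kinds of differences.

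First I would combine the three ingredients above to establish the claim
\begin{equation*}
D_{n+1}\le \tilde{C}\, D_{n-2}, \qquad \forall\, n\ge 2,
\end{equation*}
with $\tilde C := 1 - e^{-\tilde K\tau}(1-C) \in (0,1)$. The computation is identical to the one in Theorem \ref{main}: start from $D_{n+1}\le e^{-\tilde K\tau} d(n\tau) + (1-e^{-\tilde K\tau}) D_n$, bound $d(n\tau)$ by $CD_{n-2}$ via Lemma \ref{4.7}, and then use the monotonicity $D_n\le D_{n-2}$ to factor out $D_{n-2}$.

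Iterating the claim on the subsequence $\{D_{3n}\}$ yields $D_{3n}\le \tilde C^{\,n} D_0$ for all $n\in\N_0$, which can be rewritten as $D_{3n}\le e^{-3n\tilde\gamma\tau} D_0$ with
\begin{equation*}
\tilde\gamma := \frac{1}{3\tau}\ln\!\left(\frac{1}{\tilde C}\right) > 0.
\end{equation*}
Finally, given any $t\ge 0$, pick $n\in\N_0$ with $t\in[3n\tau-\tau,\,3n\tau+2\tau]$ and apply Lemma \ref{4.2} (in both forms \eqref{eq12U} and \eqref{eq12U1}) to conclude $|x_i(t)-x_j(t)|\le D_{3n}$ and $|x_i(t)-y^0|\le D_{3n}$, hence $d(t)\le D_{3n}\le e^{-3n\tilde\gamma\tau}D_0 \le e^{-\tilde\gamma(t-2\tau)}D_0$, which is \eqref{exp_decay_1leaderfermo}.

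I do not expect any genuine obstacle: all the heavy lifting has already been done in Lemmas \ref{4.6} and \ref{4.7}, where the leader contributions (the $c_{i0}$ term in \eqref{eqD} and the trivial evolution of $y_0$) were absorbed into bounds analogous to the common-influencer setting; in particular the leader itself plays, in the one-step estimate, the role that the common influencer played in Lemma \ref{2.7}, so no additional structural assumption is needed. The only bookkeeping subtlety is to make sure the two cases in the definition of $d(n\tau)$ (realized either by $|x_i(n\tau)-y^0|$ or by $|x_i(n\tau)-x_j(n\tau)|$) are both covered — but Lemma \ref{4.7} already handles them, so the proof reduces to the short algebraic chain above.
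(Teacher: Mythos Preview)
Your proposal is correct and follows exactly the route the paper takes: the paper simply states that the result ``follows arguing as in the previous section,'' i.e., by repeating the proof of Theorem~\ref{main} with Lemmas~\ref{4.2}, \ref{4.6}, \ref{4.7} in place of Lemmas~\ref{2.2}, \ref{2.6}, \ref{2.7} and with $\tilde K$, $\tilde C$, $\tilde\gamma$ replacing $K$, $\tilde C$, $\gamma$. Your write-up spells this out in full and identifies the only bookkeeping point (the two cases in $d(n\tau)$), which Lemma~\ref{4.7} indeed already covers.
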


\subsection{A unique leader with controlled trajectory}
Consider now a HK-model with a unique leader following a controlled trajectory: 
\begin{equation}\label{eqcontr}
\begin{split}
& \frac{d}{d t}y_0(t)= u(t) \in \R^d, \quad t>0, \\
& \frac{d}{d t}x_i(t)=  \frac{1}{N} \sum_{ j \neq i } \chi_{ij}b_{ij}(t)(x_j(t-\tau_{ij})-x_i(t)) +  \frac{c_{i0}(t)}N(y_0(t-\tau_{i0})-x_i(t)), \\
& \hspace{7 cm} \quad t>0,\ i=1,...,N, 
\end{split}
\end{equation}
where $b_{ij}(t),$ $i,j=1,\dots,N,$ are defined as in \eqref{b} and
 $c_{0i}(t)= \phi_{0i}(x_i(t),y_0(t)),$
with 
$\phi_{0i}:\R^d \times \R^d\rightarrow \R$  continuous, positive and bounded, as before.
Let us assume the initial conditions
\begin{equation}\label{ICcont}
y_0(0)=y^0(t) \quad \mbox{\rm and}\quad
x_i(t)= x_i^0(t), \quad t\in [-\tau, 0], \ \forall\ i=1, \dots, N,
\end{equation}
being, as before, $\tau=\max \left\{ \max_{i,j=1,\dots,N} \tau_{ij}, \max_{i=1,\dots,N} \tau_{i0} \right\}.$
Here, we want to study the convergence to any consensus state for solutions to  system \eqref{eqcontr}. In this case, the  definition of the \emph{diameter function} can be written as
$$d(t):=\max \left\{ \max_{i=1, \dots, N} |x_i(t)-y_0(t)|, \max_{i,j=1, \dots, N}\vert x_i(t)-x_j(t)\vert \right\}.$$

Note that, also in this configuration, assumption \textbf{(CI)} is not satisfied.\\
Consider, again, the appropriate definitions of $m_T$ and $M_T$: 
$$ M_T:= \max \left\{ \max_{j=1,...,N} \max_{s \in [T-\tau,T]}\langle x_j(s), v \rangle, \max_{s \in [T-\tau,T]} \langle y_0(s),v\rangle \right\},$$
$$ m_T:= \min \left\{ \min_{j=1,...,N} \min_{s \in [T-\tau,T]}\langle x_j(s), v \rangle, \min_{s \in [T-\tau,T]} \langle y_0(s),v\rangle \right\}.$$
and let
$\tilde{K}$ be the constant defined in \eqref{numero}.
\begin{Definition}
Fix a constant $M>0.$ We say that a ${\mathcal L^1}$-measurable strategy $u(\cdot)$  is admissible if $\Vert u \Vert_{\infty} \leq M.$
\end{Definition}

We will prove that, given any state $\bar{x} \in \R^d,$ there exists an admissible strategy that steers each agent to $\bar{x}.$ Firstly, in the following lemma we prove that there exists an admissible strategy that steers the leader to $\bar{x} \in \R^d$ and the other agents stay close enough to $\bar{x}.$
\begin{Lemma}
Let us consider the  system \eqref{eqcontr}-\eqref{ICcont} and let $\bar{x} \in \R^d.$ Then, there exists an admissible control strategy $u:[0,+\infty)\rightarrow\R^d$ such that $y_0$ reaches $\bar{x}$ in finite time, i.e. there exists $t_0>0$ such that $y_0(t)=\bar{x}$ for all $t\geq t_0.$ Moreover, if we denote
\begin{equation} \label{Rdef}
R:=\max \left\{\max_{i=1,\dots,N} \max_{s \in [-\tau,0]} \vert x_i(s)-\bar{x} \vert, \max_{s\in [-\tau, 0]}\vert y_0(s)-\bar{x}\vert \right\},
\end{equation}
then 
\begin{equation} \label{close}
\max_{i=1,\dots,N} \vert x_i(t)-\bar{x}\vert \leq R,
\end{equation}
for all $t\geq0.$
\end{Lemma}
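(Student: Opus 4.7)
The plan is to drive the leader in a straight line towards $\bar x$ at maximal admissible speed and then switch the control off, and to show by a Gronwall-type invariance argument that every follower $x_i$ stays in the closed ball of radius $R$ centered at $\bar x.$

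If $y_0(0)\neq \bar x,$ set $e:=(\bar x-y_0(0))/\vert\bar x-y_0(0)\vert$ and $t_0:=\vert\bar x-y_0(0)\vert/M,$ and choose
\[
u(t):=Me \ \text{for } t\in[0,t_0],\qquad u(t):=0 \ \text{for } t>t_0,
\]
and $u\equiv 0$ if $y_0(0)=\bar x.$ Then $\Vert u\Vert_\infty\le M,$ and integrating the leader equation in \eqref{eqcontr} gives $y_0(t)=y_0(0)+tMe$ on $[0,t_0]$ and $y_0(t)\equiv\bar x$ for $t\ge t_0,$ so $y_0$ reaches $\bar x$ in finite time. Moreover $t\mapsto\vert y_0(t)-\bar x\vert$ is non-increasing on $[0,\infty),$ hence $\vert y_0(t)-\bar x\vert\le R$ for every $t\ge 0.$

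For the followers, set $z_i(t):=x_i(t)-\bar x$ and $A_i(t):=\frac{1}{N}\sum_{j\neq i}\chi_{ij}b_{ij}(t)+\frac{c_{i0}(t)}{N}\ge 0.$ Subtracting the constant $\bar x$ from the equation for $x_i$ in \eqref{eqcontr} and applying variation of constants, the non-negativity of $\chi_{ij}b_{ij}$ and $c_{i0}$ together with the identity $\int_0^t e^{-\int_s^t A_i(r)dr}A_i(s)\,ds=1-e^{-\int_0^t A_i(s)ds}$ yields, for every $t\ge 0,$
\[
\vert z_i(t)\vert\le e^{-\int_0^t A_i(s)ds}\,R+\Lambda(t)\bigl(1-e^{-\int_0^t A_i(s)ds}\bigr),
\]
where
\[
\Lambda(t):=\max\Bigl\{\max_{i=1,\dots,N}\sup_{s\in[-\tau,t]}\vert z_i(s)\vert,\ \sup_{s\in[-\tau,t]}\vert y_0(s)-\bar x\vert\Bigr\}
\]
is continuous and non-decreasing with $\Lambda(0)=R,$ since every delayed argument $s-\tau_{ij}$ and $s-\tau_{i0}$ lies in $[-\tau,t].$

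Assume by contradiction that $\Lambda(t_1)>R$ for some $t_1>0$ and set $t^*:=\inf\{t>0:\Lambda(t)>R\};$ by continuity $\Lambda(t^*)=R,$ and the maximum defining $\Lambda(t_1)$ must be attained at some $s^*\in(t^*,t_1].$ Since $\vert y_0(s^*)-\bar x\vert\le R<\Lambda(t_1),$ some follower index $i^*$ satisfies $\vert z_{i^*}(s^*)\vert=\Lambda(s^*)=\Lambda(t_1).$ Inserting $(i^*,s^*)$ into the pointwise bound above gives $(\Lambda(t_1)-R)\,e^{-\int_0^{s^*}A_{i^*}(r)dr}\le 0,$ and the strict positivity of the exponential (which holds since $A_{i^*}$ is bounded by \eqref{numero}) forces $\Lambda(t_1)\le R,$ a contradiction. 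Hence $\Lambda(t)\le R$ for every $t\ge 0,$ proving \eqref{close}. The main obstacle is precisely this closure step: the raw variation of constants estimate only reproduces the tautology $\vert z_i(t)\vert\le\Lambda(t),$ and one must exploit the strict exponential factor $e^{-\int_0^{s^*}A_{i^*}(r)dr}>0$ to rule out the supremum of $\Lambda$ being attained strictly after $t^*.$
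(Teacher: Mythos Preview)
Your proof is correct and follows the same overall strategy as the paper: identical bang--bang control driving $y_0$ to $\bar x$ along a straight segment, followed by a forward--invariance argument showing the ball $B(\bar x,R)$ traps all followers.

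The execution of the invariance step differs in a useful way. The paper fixes an arbitrary unit vector $v$, introduces an $\varepsilon$--slack set
\[
\mathcal K_\varepsilon=\Bigl\{t\ge 0:\ \max_i\langle x_i(s)-\bar x,v\rangle<R+\varepsilon\ \forall s\in[0,t)\Bigr\},
\]
proves $\sup\mathcal K_\varepsilon=+\infty$ by Gr\"onwall, and then lets $\varepsilon\to 0$ and optimizes over $v$. You instead work directly with the norm via variation of constants, introduce the running supremum $\Lambda(t)$, and close by exploiting the strict positivity of the integrating factor at the first escape point. Your route avoids both the projection device and the $\varepsilon$--slack, and it makes transparent why the argument works: the equation for $z_i$ writes $z_i(t)$ as a genuine convex combination of $z_i(0)$ and delayed states, all of which lie in $B(0,\Lambda(t))$, so the ball is invariant. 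The paper's approach, on the other hand, stays within the one--dimensional framework used consistently elsewhere in the article (Lemmas~\ref{2.1},~\ref{2.6},~\ref{4.1}), which keeps the presentation uniform. Both arguments rely on the same structural fact, and neither is materially shorter; yours is somewhat more self--contained.

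One small remark: the strict positivity of $e^{-\int_0^{s^*}A_{i^*}}$ follows automatically from integrability of $A_{i^*}$ on the bounded interval $[0,s^*]$; invoking the bound \eqref{numero} is sufficient but slightly more than needed.
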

\begin{proof}
Let us define for $t\geq0$ the admissible control 
\begin{equation} \label{u}
u(t):=
\begin{cases}
M\frac{\bar{x}-y^0}{\vert \bar{x}-y^0\vert}, \quad\ \  \mbox{ if } y_0(t)\neq\bar{x}, \\
0, \hspace{2 cm} \mbox{ if } y_0(t)=\bar{x},
\end{cases}
\end{equation}
where $y^0=y_0(0).$
Consider the unit vector $v \in \R^d$ defined as 
$$ v:=\frac{y^0-\bar{x}}{\vert y^0-\bar{x}\vert},$$
so that $\langle y^0-\bar{x},v \rangle = \vert y^0-\bar{x} \vert.$
Therefore, from the definition of the system \eqref{eqcontr} and \eqref{u}, we have for $t\geq0$ that
\begin{equation*}
\frac{d}{d t} \langle y_0(t)-\bar{x},v \rangle = \frac{M}{\vert \bar{x}-y^0 \vert} \langle \bar{x}-y^0,v \rangle  = - \frac{M}{\vert \bar{x}-y^0\vert} \vert \bar{x}-y^0\vert = -M.
\end{equation*}
Therefore, 
$$ \langle y_0(t)-\bar{x},v \rangle \leq \vert y_0(t)-\bar{x} \vert \leq \vert y^0-\bar{x}\vert -Mt, \quad t \in [0,t_0],$$
where 
$$ t_0:=\frac{\vert y^0-\bar{x}\vert}{M}.$$
It is immediate to see that 
$$\langle y_0(t)-\bar{x},v \rangle=\vert y_0(t)-\bar{x}\vert,$$
then the first part of the lemma is proven. Notice now that by definition $\vert y^0-\bar{x}\vert \leq R,$ then $$\vert y_0(t)-\bar{x}\vert \leq R,$$ for all $t\geq0.$ Now, let $v \in \R^d$ be a unit vector and let us define the set 
$$ \mathcal{K}_{\epsilon}:=\left\{ t \in [0,+\infty) \Big\vert \max_{i=1,\dots,N} \langle x_i(s)-\bar{x},v \rangle < R+\epsilon, \ \forall s \in [0,t) \right\}.$$
By continuity, $\mathcal{K}\neq \emptyset.$ Let $S:=\sup \mathcal{K}_{\epsilon}.$ We claim that $S=+\infty.$ In order to prove it, suppose by contraddiction that $S<+\infty.$ Then,
$$ \lim_{t \rightarrow S^-} \max_{i=1,\dots,N} \langle x_i(s)-\bar{x},v \rangle = R+\epsilon.$$ 
Consider $t \in [0,S).$ Then, from \eqref{eqcontr}, we have that
\begin{equation*}
\begin{split}
\frac{d}{d t} \langle x_i(t)-\bar{x},v \rangle & = \frac{1}{N}\sum_{j \neq i} \chi_{ij}b_{ij}(t) \langle x_j(t-\tau_{ij})-x_i(t),v \rangle \\
& \hspace{2.5 cm}+ \frac{c_{i0}(t)}{N} \langle y_0(t-\tau_{i0})-x_i(t),v \rangle \\
& = \frac{1}{N}\sum_{j \neq i} \chi_{ij}b_{ij}(t) \left(\langle x_j(t-\tau_{ij})-\bar{x},v \rangle - \langle x_i(t)-\bar{x},v \rangle \right) \\
& \hspace{2.5 cm} + \frac{c_{i0}(t)}{N} \left(\langle y_0(t-\tau_{i0})-\bar{x},v \rangle - \langle x_i(t)-\bar{x},v \rangle \right) \\
& \leq \tilde{K}(R+\epsilon-\langle x_i(t)-\bar{x},v\rangle),
\end{split}
\end{equation*} 
where we used the fact that, if $t \in [0,S),$ then $t-\tau_{ij} \in [-\tau,S),$ for all $i=1,\dots,N, \ j=0,\dots,N,$ and, by definition of $\mathcal{K}_\epsilon$ we have 
$$  \langle x_j(t-\tau_{ij})-\bar{x},v \rangle \leq R+\epsilon,$$
and
$$ \langle y_0(t-\tau_{i0})-\bar{x},v \rangle \leq R+\epsilon.$$
Using the Grönwall inequality, we get 
\begin{equation*}
\begin{split}
 \langle x_i(t)-\bar{x},v \rangle & \leq e^{-\tilde{K}t} \langle x_i(0)-\bar{x},v \rangle+(R+\epsilon)(1-e^{-\tilde{K}t}) \\
 & \leq e^{-\tilde{K}t}R+R+\epsilon-e^{-\tilde{K}t}R -\epsilon e^{-\tilde{K}t} \\
& = R+\epsilon - \epsilon e^{-\tilde{K}t} \\
& \leq R+\epsilon - \epsilon e^{-\tilde{K}S} < R+\epsilon.
\end{split}
\end{equation*}
Therefore, sending $t \rightarrow S^-,$ we finally find 
$$  \lim_{t \rightarrow S^-} \max_{i=1,\dots,N} \langle x_i(t)-\bar{x},v \rangle < R +\epsilon,$$
that gives the contradiction. Thus, $S=+\infty$ and 
$$ \max_{i=1,\dots,N} \langle x_i(t)-\bar{x},v \rangle < R + \epsilon,$$
for all $t\geq 0$ and $v \in \R^d.$ Because of the arbitrarily of $v \in \R^d$ we can choose 
$$ v:=\frac{x_i(t)-\bar{x}}{\vert x_i(t)-\bar{x} \vert},$$ 
so that $  \langle x_i(t)-\bar{x},v \rangle = \vert x_i(t)-\bar{x} \vert.$ Hence, we have that 
$$ \max_{i=1,\dots,N} \vert x_i(t)-\bar{x}\vert < R + \epsilon.$$
Finally, sending $\epsilon \rightarrow 0,$ we have the result. 
\end{proof}

We are now able to prove the following result.
\begin{Theorem}\label{maincont}
Let us consider the  system \eqref{eqcontr}-\eqref{ICcont} and let $\bar{x} \in \R^d.$ Then, there exists an admissible control strategy $u:[0,+\infty)\rightarrow\R^d$ such that the leader $y_0$ reaches $\bar{x}$ in finite time, and
    \begin{equation} \label{exp_decay_control}
        d(t)\leq {C^*}e^{-{\gamma}^* t}, \quad \forall \  t \geq t_0,
    \end{equation}
for suitable positive constants ${\gamma}^*$ and $C^*.$
\end{Theorem}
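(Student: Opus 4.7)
The plan is to reduce Theorem \ref{maincont} to Theorem \ref{main_1leaderfermo} by using the admissible control already constructed in the preceding lemma. First, I would fix the control strategy $u$ defined in \eqref{u}; by that lemma it drives $y_0$ to $\bar{x}$ in the explicit finite time $t_0=|y^0-\bar{x}|/M$, after which $y_0(t)\equiv \bar{x}$ (obtained by taking $u\equiv 0$ for $t\ge t_0$), and it guarantees the uniform estimate $|x_i(t)-\bar{x}|\le R$ from \eqref{close} for every $t\ge 0$ and every $i=1,\dots,N$.

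The key observation is a delay-bookkeeping one: once $t\ge t_0+\tau$, every argument in the delayed leader term satisfies $t-\tau_{i0}\ge t_0$, hence $y_0(t-\tau_{i0})=\bar{x}$. Consequently, the restriction of the controlled system \eqref{eqcontr} to $[t_0+\tau,+\infty)$ coincides exactly with the constant-leader system \eqref{eqD} in which $y^0$ is replaced by $\bar{x}$, with the ``history'' of the $x_i$'s prescribed on the interval $[t_0,t_0+\tau]$ of length $\tau$.

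Next I would apply Theorem \ref{main_1leaderfermo} to this shifted system. Since \eqref{close} gives $|x_i(s)-\bar{x}|\le R$ for $s\in[t_0,t_0+\tau]$, the relevant initial diameter $D_0$ (in the sense of Definition \ref{defU}, after translating the time origin to $t_0+\tau$) satisfies $D_0\le 2R$. The theorem then yields a constant $\tilde\gamma>0$, depending only on the influence functions, $\tau$, and $N$, such that
\[
d(t)\le 2R\,e^{-\tilde\gamma(t-t_0-3\tau)},\qquad \forall\,t\ge t_0+\tau.
\]
On the remaining short interval $[t_0,t_0+\tau]$ the trivial bound $d(t)\le 2R$ already follows from \eqref{close}. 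Setting $\gamma^*:=\tilde\gamma$ and choosing $C^*$ large enough (e.g., $C^*:=2R\,e^{\tilde\gamma(t_0+3\tau)}$) to dominate both pieces yields the desired exponential decay \eqref{exp_decay_control}.

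The main subtlety is precisely the delay bookkeeping described above: although the leader becomes stationary at time $t_0$, the system does not become autonomous with respect to $\bar{x}$ until the additional delay $\tau$ has elapsed, so the reduction to Theorem \ref{main_1leaderfermo} is only exact on $[t_0+\tau,+\infty)$. Handling the transient window $[t_0,t_0+\tau]$ with the uniform estimate \eqref{close} and absorbing it into the constant $C^*$ is the final, cosmetic step.
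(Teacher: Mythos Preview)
Your proposal is correct and follows essentially the same route as the paper: use the control from the preliminary lemma to park the leader at $\bar{x}$ in finite time, then invoke the constant-leader result of Section~\ref{one}. Your treatment is in fact more careful than the paper's, which simply says ``apply the results in Section~\ref{one} since $\frac{d}{dt}y_0(t)=0$ for all $t\ge t_0$'' without addressing the fact that the controlled system \eqref{eqcontr} involves the \emph{delayed} value $y_0(t-\tau_{i0})$; your explicit bookkeeping (waiting until $t_0+\tau$ so that all delayed leader arguments equal $\bar{x}$, then absorbing the transient window $[t_0,t_0+\tau]$ into $C^*$ via \eqref{close}) makes the reduction to \eqref{eqD} rigorous.
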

\begin{proof}
Notice that with the preliminary lemma we prove that the leader reaches in a finite time $t_0$ the state $\bar{x} \in \R^d$ and stay there for all $t\geq ,t_0,$ and the other agents maintain the distance from $\bar{x}$ less or equal than $R.$ Now, in order to prove the consensus result one can apply the results in Section \ref{one}, since 
$$ \frac{d}{d t} y_0(t)=0,$$
for all $t\geq t_0.$
\end{proof}

\begin{Remark}
Note that the above result extends the one in \cite{WCB}, that proved the consensus in the absence of time delays, and the one in \cite{PaolucciP}, considering delay effects, since now we do not need any restrictions on the time delay size. We point that \cite{PaolucciP,WCB} assume the  coefficients  $b_{ij}(t)$ compactly supported while here they are always positive. However, it is easy to see from the proof that our result is valid also in the case $b_{ij}(t)$ compactly supported. What we need is, indeed, that the leader's influence is always positive only, exactly as in \cite{PaolucciP,WCB}.
\end{Remark}

\section{A HK-model with two leaders} \label{twoleaders}
Now, we want to analyze the case of a Hegselmann-Krause model in the presence of two leaders. The system reads as follows:
\begin{equation}\label{eqDUE}
\begin{array}{l}
\displaystyle{
 \frac{d}{d t}y_i(t)=  a_{ij}(t)(y_j(t-\tilde\tau_j)-y_i(t)), \quad t>0, \ i=1,2,} \\
\displaystyle{ \frac{d}{d t}x_i(t)=  \frac{1}{N+1} \sum_{ j \neq i } \chi_{ij}b_{ij}(t)(x_j(t-\tau_{ij})-x_i(t))}\\
\displaystyle{\hspace{1.3 cm} +  \frac{1}{N+1} \sum_{j=1}^{2} c_{ij}(t)(y_j(t-\tilde\tau_j)-x_i(t)), \quad t>0,\ i=1,...,N,} 
\end{array}
\end{equation}
with the interaction weights $a_{ij}(t), b_{ij}(t), c_{ij}(t), \ t \ge 0,$   defined as in   \eqref{anew}, \eqref{b}, \eqref{c}. \\
Also in this case the assumption \textbf{(CI)} is not satisfied. Indeed, the two leaders do not admit a common influencer. 
Let us assume the initial conditions
\begin{equation}\label{IC2}
\begin{split}
& y_i(t)=y_i^0(t), \  \quad t \in [-\tau,0], \ i=1,2, \\
& x_i(t)= x_i^0(t), \quad t\in [-\tau, 0], \ \forall\ i=1, \dots, N,
\end{split}
\end{equation}
being $\tau=\max \left\{\tilde\tau_1, \tilde\tau_2, \max_{i,j=1,\dots,N} \tau_{ij}\right \}.$
We want to study the convergence to {consensus} of  system \eqref{eqDUE}. The \emph{diameter function} can be written, in this case, as
$$d(t):=\max \left\{ \max_{\substack{i=1, \dots, N \\ j=1,2}} |x_i(t)-y_i(t)|, \max_{i,j=1, \dots, N}\vert x_i(t)-x_j(t)\vert, \vert y_1(t)-y_2(t)\vert \right\}.$$

Again, let us define the following quantities: 
$$ M_T:= \max \left\{ \max_{j=1,...,N} \max_{s \in [T-\tau,T]}\langle x_j(s), v \rangle, \max_{i=1,2} \max_{t \in [-\tau,0]} \langle y_i(t),v\rangle \right\},$$
and 
$$ m_T:= \min \left\{ \min_{j=1,...,N} \min_{s \in [T-\tau,T]}\langle x_j(s), v \rangle, \min_{i=1,2} \min_{t \in [-\tau,0]} \langle y_i(t),v\rangle \right\},$$
and let  
$$\tilde K:=\max\left\{  \max_{i=1,2}\Vert \tilde\psi_{ij}\Vert_{\infty}, \max_{i,j=1,\dots,N}\Vert\psi_{ij}\Vert_\infty, \max_{i=1,...,N} \Vert\psi^*_{i1}\Vert_\infty, \max_{i=1,...,N} \Vert\psi^*_{i2}\Vert_\infty \right\}.$$

Analogously to the one leader case, we can prove the following preliminary lemmas.

\begin{Lemma} \label{2.1doppione}
Let $\left(\{x_i(t)\}_{i=1}^N,\{y_j(t)\}_{j=1}^2\right)$ be a solution to \eqref{eqDUE}-\eqref{IC2}. Then, for each $v \in \R^d$ and $T \geq 0 $, we have 
\begin{equation} \label{eq4.2}
m_T  \leq \langle x_i(t), v \rangle \leq M_T, 
\end{equation}
and 
\begin{equation} \label{eq4.2}
m_T  \leq \langle y_j(t), v \rangle \leq M_T, 
\end{equation}
for all $t\ge T-\tau,$ $i=1, \dots, N$ and $j=1,2.$
\end{Lemma}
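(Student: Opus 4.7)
The plan is to mirror the arguments of Lemma \ref{2.1} and Lemma \ref{4.1}, accounting now for the presence of two leaders $y_1,y_2$ in addition to the $N$ followers. I would set up a standard forward-invariance (maximum-principle) argument on the scalar projections $\Phi_i(t) := \langle x_i(t),v\rangle$ for $i=1,\dots,N$ and $\Psi_j(t) := \langle y_j(t),v\rangle$ for $j=1,2$. Projecting \eqref{eqDUE} onto $v$ gives
\begin{align*}
\dot\Psi_i(t) &= a_{ij}(t)\bigl(\Psi_j(t-\tilde\tau_j)-\Psi_i(t)\bigr),\quad i=1,2,\ j\in\{1,2\}\setminus\{i\},\\
\dot\Phi_i(t) &= \frac{1}{N+1}\sum_{k\neq i}\chi_{ik}b_{ik}(t)\bigl(\Phi_k(t-\tau_{ik})-\Phi_i(t)\bigr)+\frac{1}{N+1}\sum_{j=1}^{2}c_{ij}(t)\bigl(\Psi_j(t-\tilde\tau_j)-\Phi_i(t)\bigr),
\end{align*}
and each right-hand side is a sum, with non-negative coefficients (since $a_{ij},b_{ij},c_{ij},\chi_{ik}\ge 0$), of terms of the form \emph{(delayed value of some agent)} minus \emph{(current value of the agent being evolved)}. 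This cooperative structure is precisely what should prevent the maximum projection from growing past $M_T$.

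First I would note that on $[T-\tau,T]$ the bounds $m_T\le\Phi_i(t)\le M_T$ and $m_T\le\Psi_j(t)\le M_T$ hold by the very definition of $M_T$ and $m_T$. Then I would introduce $F(t):=\max\{\max_{1\le i\le N}\Phi_i(t),\max_{1\le j\le 2}\Psi_j(t)\}$ and argue by contradiction: assume $t^* := \inf\{t>T : F(t)>M_T\}<+\infty$. By continuity $F(t^*)=M_T$, and some coordinate (some $\Phi_i$ or some $\Psi_j$) attains $M_T$ at $t^*$. For every delay $\sigma\in\{\tau_{ik},\tilde\tau_j\}$ one has $\sigma\le\tau$ and $t^*\ge T$, hence $t^*-\sigma\in[T-\tau,t^*]$: on $[T-\tau,T]$ the upper bound by $M_T$ holds by definition, on $(T,t^*)$ it holds by the minimality of $t^*$. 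Therefore every delayed projection appearing in the equation of a maximizing coordinate is $\le M_T$, and the right-hand side there is a sum of non-positive terms, yielding a non-positive right derivative at $t^*$. This contradicts $F(t^*+\varepsilon)>M_T$ for arbitrarily small $\varepsilon>0$, proving $F(t)\le M_T$ on $[T-\tau,+\infty)$. The lower bound follows by applying the same argument to $-v$ in place of $v$.

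The only mildly delicate point, which I would address explicitly, is that several coordinates might realize the maximum at $t^*$; however the argument above produces a non-positive right derivative for \emph{every} coordinate that attains the maximum, and $F$ is the pointwise maximum of finitely many such coordinates, so $F$ itself has non-positive right (Dini) derivative at $t^*$. Beyond this bookkeeping, I do not anticipate any substantial obstacle: the two-leader setting only introduces the extra followers-to-leaders coupling $c_{ij}(t)(\Psi_j(t-\tilde\tau_j)-\Phi_i(t))$ and the leader-to-leader coupling $a_{ij}(t)(\Psi_j(t-\tilde\tau_j)-\Psi_i(t))$, both of which preserve the cooperative sign structure exploited in the proofs of Lemma \ref{2.1} and Lemma \ref{4.1}.
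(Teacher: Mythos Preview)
Your approach is the standard forward-invariance (maximum-principle) argument that the paper has in mind; the paper omits the proof here, pointing back to Lemma~\ref{4.1} and ultimately to \cite{CPM2A}, and your identification of the cooperative sign structure together with the reduction to scalar projections $\Phi_i,\Psi_j$ is exactly what that reference does.

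There is one small technical wrinkle worth tightening. A non-positive right Dini derivative of $F$ at the single instant $t^*$ does not, on its own, contradict $F(t^*+\varepsilon)>M_T$ for arbitrarily small $\varepsilon>0$: think of $t\mapsto M_T+(t-t^*)^2$, which has value $M_T$ and right derivative $0$ at $t^*$ yet immediately exceeds $M_T$. Two easy fixes are available. The first is the $\varepsilon$-perturbation device the paper itself uses later (the set $\mathcal{K}_\varepsilon$ in the proof of the control lemma in Section~\ref{oneleader}): show $F(t)<M_T+\varepsilon$ for all $t\ge T$ and every $\varepsilon>0$, where now the strict inequality at the initial time propagates. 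The second exploits that all delays are strictly positive: for $t\in[t^*,t^*+\tau_{\min}]$ with $\tau_{\min}:=\min\{\tau_{ik},\tilde\tau_j\}>0$, every delayed argument still lies in $[T-\tau,t^*]$, so the inequality $\dot\Phi_i(t)\le L_i(t)\bigl(M_T-\Phi_i(t)\bigr)$ (and the analogue for $\Psi_j$) holds on the whole interval, and Gr\"onwall gives $\Phi_i(t)\le M_T$ there, contradicting the definition of $t^*$. With either patch your argument is complete and coincides with the paper's intended proof.
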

Let us introduce the appropriate notation:
\begin{Definition} \label{def2}
We define 
\begin{equation}\label{D02}
\begin{split}
& D_0:=\max \Big\{ \max_{i,j=1,\dots,N} \max_{s,t \in [-\tau,0]} \vert x_i(s)-x_j(t) \vert, \max_{\substack{i=1,\dots,N \\ j=1,2}} \max_{s,t \in [-\tau,0]} \vert x_i(s)-y_j(t) \vert, \\
& \hspace{5.5 cm } \max_{s,t \in [-\tau,0]} \vert y_1(s)-y_2(t) \vert  \Big\},
\end{split}
\end{equation}
and in general for all $n \in \N,$
\begin{equation}\label{Dn2}
\begin{split}
& D_n:=\max \Big\{ \max_{i,j=1,\dots,N} \max_{s,t \in [n\tau-\tau,n\tau]} \vert x_i(s)-x_j(t) \vert, \\
&  \hspace{2 cm} \max_{\substack{i=1,\dots,N \\ j=1,2}} \max_{s,t \in [n\tau-\tau,n\tau]} \vert x_i(s)-y_j(t) \vert, 
\max_{s,t \in [n\tau-\tau,n\tau]} \vert y_1(s)-y_2(t) \vert \Big\}.
\end{split}
\end{equation}
\end{Definition}

\begin{Lemma} \label{5.2}
Let $\left(\{x_i(t)\}_{i=1}^N,\{y_j(t)\}_{j=1}^2\right)$ be a solution to \eqref{eqDUE}-\eqref{IC2}. 
For each $n \in \N_0,$  we get
\begin{equation}\label{eq12.2}
\begin{split}
& |x_i(t)-x_j(t)| \leq D_n, \quad \forall \ i,j=1,...,N,\\
& |x_i(t)-y_j(t)| \leq D_n, \quad \forall \ i=1,\dots,N, \ j=1,2,\\
& |y_1(t)-y_2(t)| \leq D_n,
\end{split}
\end{equation}
for all $ t \geq n\tau-\tau.$
\end{Lemma}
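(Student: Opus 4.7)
The plan is to imitate the strategy behind Lemma 2.2 and Lemma 4.2, which are the single-population and one-leader analogues: use the scalar-projection bound of Lemma \ref{2.1doppione} for arbitrary $v$ and then optimize $v$ over the unit sphere to convert the one-sided inner-product bound into a two-sided norm bound.

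More precisely, first I fix $n\in\N_0$ and a unit vector $v\in\R^d$, and apply Lemma \ref{2.1doppione} with $T=n\tau$. This yields simultaneously
\begin{equation*}
m_{n\tau}\le \langle x_i(t),v\rangle\le M_{n\tau}, \qquad m_{n\tau}\le \langle y_j(t),v\rangle\le M_{n\tau},
\end{equation*}
for every $t\ge n\tau-\tau$, every $i=1,\dots,N$ and every $j=1,2$. Subtracting any two such inequalities gives, in one stroke, the three scalar estimates
\begin{equation*}
\langle x_i(t)-x_j(t),v\rangle\le M_{n\tau}-m_{n\tau}, \quad \langle x_i(t)-y_j(t),v\rangle\le M_{n\tau}-m_{n\tau}, \quad \langle y_1(t)-y_2(t),v\rangle\le M_{n\tau}-m_{n\tau}.
\end{equation*}

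Next I would show the key geometric inequality $M_{n\tau}-m_{n\tau}\le D_n$. This is really just Cauchy--Schwarz: by compactness the supremum defining $M_{n\tau}$ is attained at some $z_1\in\{x_k(s),y_l(s):\,k=1,\dots,N,\,l=1,2,\,s\in[n\tau-\tau,n\tau]\}$ and the infimum defining $m_{n\tau}$ is attained at some $z_2$ in the same finite set. Therefore
\begin{equation*}
M_{n\tau}-m_{n\tau}=\langle z_1-z_2,v\rangle\le |z_1-z_2|\,|v|=|z_1-z_2|\le D_n,
\end{equation*}
where the last bound is just the definition \eqref{Dn2} of $D_n$, which covers all three types of differences (leader--leader, follower--follower, leader--follower) over the interval $[n\tau-\tau,n\tau]$.

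Finally, to upgrade each scalar estimate to an absolute-value estimate, I fix $t\ge n\tau-\tau$ and the corresponding pair of agents, and I \emph{choose} $v$ to be the unit vector in the direction of their difference (the case where the difference vanishes is trivial). For instance, taking $v=(x_i(t)-x_j(t))/|x_i(t)-x_j(t)|$ gives $|x_i(t)-x_j(t)|=\langle x_i(t)-x_j(t),v\rangle\le D_n$, and analogously for $|x_i(t)-y_j(t)|$ and $|y_1(t)-y_2(t)|$. No step is really an obstacle here; the only thing one has to be careful about is that the three types of extrema entering $M_{n\tau}$, $m_{n\tau}$ and $D_n$ match up, which is guaranteed by the inclusive definitions in Lemma \ref{2.1doppione} and Definition \ref{def2}.
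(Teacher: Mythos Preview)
Your proposal is correct and follows exactly the route the paper intends: the paper omits the proof of Lemma~\ref{5.2}, stating only that it is analogous to the earlier cases (Lemma~\ref{2.2} and Lemma~\ref{4.2}, themselves reduced to \cite{CPM2A}), and that argument is precisely the projection bound of Lemma~\ref{2.1doppione} with $T=n\tau$, followed by the Cauchy--Schwarz step $M_{n\tau}-m_{n\tau}\le D_n$ and the optimization of $v$ over the unit sphere. There is nothing to add.
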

Again, as a consequence of Lemma \ref{5.2} we have that $\{D_n\}_{n \in \N_0}$ is a non-increasing sequence. 

\begin{Lemma} \label{5.4}
Let $\left(\{x_i(t)\}_{i=1}^N,\{y_j(t)\}_{j=1}^2\right)$ be a solution to \eqref{eqDUE}-\eqref{IC2}. Then,
for all $i=1,...,N$ and $j=1,2,$ we have
\begin{equation}
\begin{split}
& |x_i(t)| \leq C_0, \\
& |y_j(t)| \leq C_0
\end{split}
\label{eq16.2}
\end{equation}
for all $t \geq 0,$ where
$$ C_0:=\max \left\{ \max_{i=1,...,N} \max_{s \in [-\tau,0]} |x_i(0)|, \max_{i=1,2} \max_{s \in [-\tau,0]} |y_i(0)| \right\}.$$
\end{Lemma}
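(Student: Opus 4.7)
The plan is to mirror the proof of Lemma \ref{2.4} (which itself parallels Lemma 2.5 of \cite{CPM2A}): deduce the uniform bound on the magnitudes from the scalar confinement estimate of Lemma \ref{2.1doppione} by projecting onto a suitably chosen unit direction. The only difference with respect to the one-population case is that we must include the two leaders in both the confining quantities $M_T,m_T$ and the initial-data constant; this has already been arranged in the definitions of $M_T,m_T$ and of $C_0$ itself.

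Concretely, I would fix $t\ge 0$ and argue separately for the two types of agents. For $x_i(t)$, if $x_i(t)=0$ there is nothing to prove; otherwise set $v:=x_i(t)/|x_i(t)|$, a unit vector, so that $|x_i(t)|=\langle x_i(t),v\rangle$. Applying Lemma \ref{2.1doppione} with $T=0$ gives
\[
\langle x_i(t),v\rangle \le M_0 := \max\Bigl\{\max_{j=1,\dots,N}\max_{s\in[-\tau,0]}\langle x_j(s),v\rangle,\ \max_{j=1,2}\max_{s\in[-\tau,0]}\langle y_j(s),v\rangle\Bigr\}.
\]
By the Cauchy--Schwarz inequality, for every $s\in[-\tau,0]$ one has $\langle x_j(s),v\rangle\le|x_j(s)|$ and $\langle y_j(s),v\rangle\le|y_j(s)|$, so $M_0\le C_0$ by the very definition of $C_0$. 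Hence $|x_i(t)|\le C_0$. The bound on $|y_j(t)|$ is obtained identically, choosing instead $v:=y_j(t)/|y_j(t)|$ and invoking the second inequality of Lemma \ref{2.1doppione}.

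I do not expect any genuine obstacle: the argument is a direct transfer of the one-population proof, and the only point of care is making sure that $M_0$, which a priori depends on the chosen direction $v$, is always dominated by the direction-free constant $C_0$; this is precisely what Cauchy--Schwarz provides. Because $C_0$ incorporates both the initial trajectories of the non-leaders and those of the two leaders, no additional structural assumption (such as \textbf{(CI)}) is needed here.
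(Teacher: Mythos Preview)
Your proposal is correct and follows exactly the approach the paper intends: the paper does not write out a proof of Lemma~\ref{5.4} but merely states that it is obtained ``analogously to the one leader case'' (and ultimately to Lemma~2.5 of \cite{CPM2A}), which is precisely the projection-onto-a-unit-direction argument via Lemma~\ref{2.1doppione} with $T=0$ and Cauchy--Schwarz that you describe. The only minor point worth noting is that the constant $C_0$ in the statement contains the apparent typo $|x_i(0)|,|y_i(0)|$ where $|x_i(s)|,|y_i(s)|$ is meant (the same typo appears in Lemma~\ref{2.4}); your proof correctly uses the intended interpretation.
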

From Lemma \ref{5.4} we deduce again a positive lower bound, $\tilde\psi_0,$ as in \eqref{lower_bound}, for the interaction rates.
Moreover, we can prove the following estimates.
\begin{Lemma} \label{5.6}
Let $\left(\{x_i(t)\}_{i=1}^N,\{y_j(t)\}_{j=1}^2\right)$ be a solution to \eqref{eqDUE}-\eqref{IC2}. 
For any unit vector $ v \in \R^d$ and $n \in \N_0$ we have that 
\begin{equation} 
\begin{split}
& \langle x_i(t)-x_j(t),v \rangle \leq e^{-\tilde K(t-t_0)} \langle x_i(t_0)-x_j(t_0),v \rangle + (1-e^{-\tilde K(t-t_0)})D_n, \\
& \hspace{6 cm}\quad i,j=1,\dots,N \\
& \langle x_i(t)-y_j(t),v \rangle \leq e^{-\tilde K(t-t_0)} \langle x_i(t_0)-y_j(t_0),v \rangle + (1-e^{-\tilde K(t-t_0)})D_n, \\
& \hspace{5.6 cm}\quad i=1,\dots,N, \ j=1,2, \\
& \langle y_1(t)-y_2(t),v \rangle \leq e^{-\tilde K(t-t_0)} \langle y_1(t_0)-y_2(t_0),v \rangle + (1-e^{-\tilde K(t-t_0)})D_n,
\end{split}
\label{eq18U}
\end{equation} 
for all $t \geq t_0 \geq n\tau.$ Moreover, \eqref{eq20} holds true for all $n \in \N_0.$
\end{Lemma}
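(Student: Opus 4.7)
The plan is to mirror the arguments of Lemma \ref{2.6} and Lemma \ref{4.6}, treating every state variable (leader or non-leader) on equal footing by projecting onto $v$ and forming a single pair of max/min envelopes. First I fix a unit vector $v\in\R^d$ and $n\in\N_0$, and define
\[
M_n:=\max\left\{\max_{i=1,\dots,N}\max_{s\in[n\tau-\tau,n\tau]}\langle x_i(s),v\rangle,\ \max_{j=1,2}\max_{s\in[n\tau-\tau,n\tau]}\langle y_j(s),v\rangle\right\},
\]
with $m_n$ analogous using minima. By Definition \ref{def2} one has $M_n-m_n\le D_n$, and Lemma \ref{2.1doppione} ensures that $\langle x_i(t),v\rangle$ and $\langle y_j(t),v\rangle$ remain in $[m_n,M_n]$ for all $t\ge n\tau-\tau$.

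The core step is to establish a \emph{single} scalar inequality
\[
\frac{d}{dt}\langle z(t),v\rangle\le \tilde K\bigl(M_n-\langle z(t),v\rangle\bigr),\qquad t\ge n\tau,
\]
for every agent $z\in\{x_1,\dots,x_N,y_1,y_2\}$. For a non-leader $x_i$ I would add and subtract $M_n$ inside each bracket of \eqref{eqDUE} to write
\[
\frac{d}{dt}\langle x_i,v\rangle=\frac{1}{N+1}\sum_{j\neq i}\chi_{ij}b_{ij}(t)\bigl(\langle x_j(t-\tau_{ij}),v\rangle-M_n\bigr)+\frac{1}{N+1}\sum_{j=1}^{2}c_{ij}(t)\bigl(\langle y_j(t-\tilde\tau_j),v\rangle-M_n\bigr)+\Sigma_i(t)\bigl(M_n-\langle x_i,v\rangle\bigr),
\]
where $\Sigma_i(t):=\tfrac{1}{N+1}\bigl(\sum_{j\neq i}\chi_{ij}b_{ij}(t)+\sum_{j=1}^{2}c_{ij}(t)\bigr)\le\tilde K$ since the total weight cannot exceed $(N-1)\tilde K+2\tilde K=(N+1)\tilde K$. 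For $t\ge n\tau$ the delayed arguments lie in $[n\tau-\tau,+\infty)$, so the first two sums are non-positive by definition of $M_n$, while the last term is non-negative; the desired inequality follows. The same bound holds for each leader $y_i$, where $\Sigma_i(t)$ collapses to the single coefficient $a_{ij}(t)\le\tilde K$.

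Gronwall's lemma then yields $\langle z(t),v\rangle\le e^{-\tilde K(t-t_0)}\langle z(t_0),v\rangle+(1-e^{-\tilde K(t-t_0)})M_n$ for every agent $z$, and the symmetric lower bound with $m_n$ holds likewise. Subtracting pairs of these pointwise inequalities and using $M_n-m_n\le D_n$ produces each of the three estimates in \eqref{eq18U}. The inequality \eqref{eq20} is then recovered as in the second half of the proof of Lemma \ref{2.6}: specialize to $t_0=n\tau$ and $t=(n+1)\tau$, maximize over $v$ and over the pair of agents realizing $D_{n+1}$, and bound the initial projection by $d(n\tau)$.

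The main piece of bookkeeping, and essentially the only place where the two-leader structure matters, is verifying that for each agent type the total interaction weight divided by its normalization constant is uniformly bounded by $\tilde K$: a non-leader has $N-1$ peer couplings plus $2$ leader couplings against denominator $N+1$, while each leader has a single coupling against denominator $1$. Once this counting is done, the three estimates are formally identical and no new idea beyond Lemma \ref{2.6} is required.
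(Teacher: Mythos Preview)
Your proposal is correct and follows essentially the same approach as the paper. The paper's own proof is a terse pointer back to Section~\ref{Prel} for the first two inequalities and to \cite{CPM2A} (via the observation that the leaders alone form a $2\times2$ Hegselmann--Krause system) for the third; you have written out precisely the details that sit behind those references, treating all $N+2$ agents uniformly via the single scalar differential inequality $\tfrac{d}{dt}\langle z,v\rangle\le\tilde K(M_n-\langle z,v\rangle)$ and the weight-counting check $\Sigma_i\le\tilde K$, which is exactly the mechanism underlying Lemma~\ref{2.6}.
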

\begin{proof}
Notice that the first two inequalities in \eqref{eq18U} can be obtained arguing as in Section \ref{Prel}. For the last inequality in \eqref{eq18U}, we want to emphasize that the equations for the leaders $y_1(t)$ and $y_2(t)$ are independent from the other agents $\{x_i(t)\}_{i=1}^{N}.$ Therefore, it represents an Hegselmann-Krause $2 \times 2$ system itself. So, the estimate follows as in \cite{CPM2A}.
\end{proof}

\begin{Lemma} \label{5.7}
Let $\left(\{x_i(t)\}_{i=1}^N,\{y_j(t)\}_{j=1}^2\right)$ be a solution to \eqref{eqDUE}-\eqref{IC2}. 
Then, there exists a constant $C \in (0,1)$ such that 
\begin{equation} \label{estimate_diam_leader}
d(n\tau) \leq C D_{n-2},
\end{equation}
for all $n \geq 2$.
\end{Lemma}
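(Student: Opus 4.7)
The plan is to mirror the strategy of Lemma \ref{2.7} and Lemma \ref{4.7}, splitting into three cases according to which pair realizes the diameter $d(n\tau)$, and in each case producing a uniform contraction factor. Fix $n \geq 2$ and pick a unit vector $v$ along the direction of the realizing pair (so that $d(n\tau) = \langle \cdot - \cdot, v\rangle$). Define
\[
M_{n-1} := \max\Bigl\{ \max_{i=1,\dots,N}\max_{s\in[(n-2)\tau,(n-1)\tau]}\langle x_i(s),v\rangle,\ \max_{j=1,2}\max_{s\in[(n-2)\tau,(n-1)\tau]}\langle y_j(s),v\rangle\Bigr\}
\]
and $m_{n-1}$ analogously with $\min$, so that $M_{n-1}-m_{n-1} \le D_{n-1} \le D_{n-2}$.

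\textbf{Case A: $d(n\tau) = |x_i(n\tau)-x_j(n\tau)|$ for some non-leaders.} Both leaders $y_1,y_2$ transmit to every $x_i$, so either one (say $y_1$, with common delay $\tilde\tau_1$) plays the role of the common influencer of Lemma \ref{2.7}. I would repeat verbatim the splitting in \eqref{eq26}--\eqref{eq28}, using $\tilde\psi_0$ as the uniform lower bound on the interaction rates (Remark \ref{2.5} analog). The cancellation $(\langle y_1(t-\tilde\tau_1),v\rangle - M_{n-1}) + (m_{n-1}-\langle y_1(t-\tilde\tau_1),v\rangle) = m_{n-1}-M_{n-1}$ supplies the negative $-\tfrac{\tilde\psi_0}{N+1}(M_{n-1}-m_{n-1})$ term, and Grönwall on $[(n-1)\tau,n\tau]$ yields the contraction.

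\textbf{Case B: $d(n\tau) = |x_i(n\tau)-y_j(n\tau)|$.} The other leader $y_{3-j}$ is now the common influencer: it enters the equation for $y_j$ with delay $\tilde\tau_{3-j}$ and the equation for $x_i$ through $c_{i,3-j}$ with the same delay $\tilde\tau_{3-j}$. So the same cancellation trick applies, with the small bookkeeping difference that the leader's equation has no $\tfrac{1}{N+1}$ factor; the lower-order $\tfrac{1}{N+1}\tilde\psi_0$ term still survives, which is all that is needed to shrink the constant below $1$.

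\textbf{Case C: $d(n\tau) = |y_1(n\tau)-y_2(n\tau)|$.} This is the genuinely different case, since the leader pair admits no external common influencer. However, as noted in the proof of Lemma \ref{5.6}, the $(y_1,y_2)$ subsystem is autonomous and is itself a two-agent Hegselmann-Krause model with delay, so the estimate of \cite[Lemma~2.6]{CPM2A} applies directly to it and gives $|y_1(n\tau)-y_2(n\tau)| \le C' D_{n-2}^{y}$ with $D_{n-2}^{y} \le D_{n-2}$, where $C' \in (0,1)$ depends only on $\tilde K$, $\tilde\psi_0$ and $\tau$. Concretely, each leader can be written as $y_i(t-\tilde\tau) = $ (its own value) $+$ a self-contracting term involving only the other leader, and the two-agent analysis shows that the contribution of the cross term provides a $-\tilde\psi_0(M_{n-1}^y - m_{n-1}^y)$-type drift.

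Taking $C$ to be the maximum of the three contraction constants (each strictly less than $1$) produces the single constant in \eqref{estimate_diam_leader}. The main obstacle I anticipate is Case C, because the two-agent HK delay system does not fit the (CI) framework; here one must rely on the direct computation of \cite{CPM2A}, and check carefully that the resulting bound is in terms of $D_{n-2}$ (not just the restricted diameter of the leader subsystem), which follows from $D_{n-2}^y \le D_{n-2}$.
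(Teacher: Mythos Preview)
Your three-case decomposition and the handling of Cases A and B match the paper's proof essentially line for line: in Case A either leader serves as common influencer with common delay $\tilde\tau_j$, and in Case B the \emph{other} leader $y_{3-j}$ does, with the mismatch between the $\tfrac{1}{N+1}$ factor on the $x_i$ side and the full factor $1$ on the $y_j$ side absorbed exactly as you indicate.

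The gap is in Case C. The lemma you cite from \cite{CPM2A} is the analogue of Lemma~\ref{5.6} here (the Gr\"onwall bounds and the estimate \eqref{eq20}); it does \emph{not} by itself produce a contraction $|y_1(n\tau)-y_2(n\tau)|\le C'D_{n-2}$ with $C'<1$. The contraction lemma in \cite{CPM2A}, like Lemma~\ref{2.7} here, relies on a third agent playing the common-influencer role, which a two-agent system does not have. Your closing remark about a ``$-\tilde\psi_0(M_{n-1}^y-m_{n-1}^y)$-type drift'' is precisely where the argument is incomplete: when you differentiate $\langle y_1-y_2,v\rangle$ the residual term is $-\tilde\psi_0\bigl(\langle y_1(t-\tilde\tau_1),v\rangle-\langle y_2(t-\tilde\tau_2),v\rangle\bigr)$, and there is no a priori reason for this to be nonnegative, so it cannot simply be dropped.

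The paper closes this with a further dichotomy on $[(n-2)\tau,n\tau]$. If $\langle y_1(t_0)-y_2(t_0),v\rangle<0$ for some $t_0$ in that interval, one applies the third inequality of Lemma~\ref{5.6} from $t_0$ to $n\tau$: the negative starting value is discarded and one gets $d(n\tau)\le(1-e^{-2\tilde K\tau})D_{n-2}$ directly. If instead $\langle y_1(t)-y_2(t),v\rangle\ge 0$ throughout, this sign hypothesis is exactly what allows one to drop the residual cross-term in the differential inequality, after which Gr\"onwall on $[(n-1)\tau,n\tau]$ yields $d(n\tau)\le\bigl(1-\tfrac{\tilde\psi_0}{\tilde K}(1-e^{-\tilde K\tau})\bigr)D_{n-2}$. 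This sign-change dichotomy is the missing ingredient in your Case C.
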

\begin{proof}
Let us suppose, as before, without loss of generality that $d(n \tau)>0$. 
Let us assume that $d(n \tau)= |x_i(n \tau)-y_j(n \tau)|,$ for some $i=1,\dots,N$ and $j=1,2.$ In particular, suppose $d(n \tau)= |x_i(n \tau)-y_1(n \tau)|.$ Let us define the unit vector $v \in \R^d$ as 
$$ v:=\frac{x_i(n \tau)-y_1(n \tau)}{\vert x_i(n \tau)-y_1(n \tau)\vert}.$$
Then we have that $d(n\tau)=\langle x_i(n \tau)-y_1(n \tau),v \rangle.$
Consider, as before, 
$$ M_{n-1}= \max \left\{ \max_{l=1,...,N} \max_{s \in [(n-2)\tau, (n-1)\tau]} \langle x_l(s),v \rangle, \max_{l=1,2} \max_{s \in [(n-2)\tau, (n-1)\tau]} \langle y_l(s),v\rangle \right\}  $$
and 
$$ m_{n-1}= \min \left\{ \min_{l=1,...,N} \min_{s \in [(n-2)\tau,(n-1)\tau]}  \langle x_l(s),v \rangle, \min_{l=1,2} \min_{s \in [(n-2)\tau, (n-1)\tau]} \langle y_l(s),v \rangle \right\} . $$
Again, we have that $M_{n-1}-m_{n-1}\leq D_{n-1}$.
For $t \in [(n-1)\tau, n \tau]$ we have that 
\begin{equation}  \label{eq}
\begin{split}
& \frac{d}{d t}\langle x_i(t)- y_1(t),v \rangle = \frac{1}{N+1} \sum_{k \neq i }\chi_{ik}b_{ik}(t)\langle x_k(t-\tau_{ik})-x_i(t),v \rangle \\
& \hspace{0.6 cm} +\frac{1}{N+1}\sum_{k=1}^2 c_{ik}(t)\langle y_k(t-\tilde\tau_k)-x_i(t),v \rangle - a_{12}(t)\langle y_2(t-\tilde\tau_2)-y_1(t),v \rangle \\
& = \frac{1}{N+1} \sum_{k \neq i }\chi_{ik}b_{ik}(t)(\langle x_k(t-\tau_{ik}),v \rangle-M_{n-1}+M_{n-1}- \langle x_i(t),v \rangle) \\
& \hspace{1 cm} + \frac{1}{N+1} \sum_{k=1}^2 c_{ik}(t)(\langle y_k(t-\tilde\tau_k),v \rangle -M_{n-1}+M_{n-1}-\langle x_i(t),v \rangle) \\
&\hspace{1 cm}  -a_{12}(t)(\langle y_2(t-\tilde\tau_2),v \rangle -m_{n-1}+m_{n-1}-\langle y_1(t),v \rangle) \\
& =: S_1+S_2.
\end{split}
\end{equation}
Since $t \in [(n-1)\tau,n \tau]$, then $ t-\tilde\tau_{k}, t-\tau_{ij} \in [(n-2)\tau,n\tau]$ for all $k=1,2,$ $i,j=1, \dots,N.$ Then, we can write
\begin{equation} 
\begin{split}
& S_1 = \frac{1}{N+1} \sum_{k \neq i } \chi_{ik}b_{ik}(t)(\langle x_k(t-\tau_{ik}),v \rangle-M_{n-1})\\
& \hspace{0.8 cm} + \frac{1}{N+1} \sum_{k \neq i } \chi_{ik}b_{ik}(t)(M_{n-1}-\langle x_i(t),v \rangle) \\
& \hspace{0.8 cm} + \frac{1}{N+1} \sum_{k=1}^2 c_{ik}(t)(\langle y_k(t-\tilde\tau_k),v \rangle -M_{n-1}) \\
& \hspace{0.8 cm} + \frac{1}{N+1} \sum_{k=1}^2 c_{ik}(t)(M_{n-1} - \langle x_i(t),v \rangle). 
\end{split}
\end{equation}
So, we find that
\begin{equation} \label{s1}
\begin{split}
& S_1 \leq \frac{\tilde\psi_0}{N+1}  \sum_{k \neq i} \chi_{ik} (\langle x_k(t-\tau_{ik}),v \rangle-M_{n-1}) + \frac{N-1}{N+1}\tilde K(M_{n-1}-\langle x_i(t),v \rangle) \\
& \hspace{0.8 cm} + \frac{\tilde \psi_0}{N+1}\sum_{k=1}^2(\langle y_k(t-\tilde\tau_k),v \rangle -M_{n-1})+ \frac{2}{N+1}\tilde K(M_{n-1} - \langle x_i(t),v \rangle) \\
& = \tilde K(M_{n-1} - \langle x_i(t),v \rangle)+\frac{\tilde\psi_0}{N+1}  \sum_{k \neq i} \chi_{ik} (\langle x_k(t-\tau_{ik}),v \rangle-M_{n-1}) \\
& \hspace{0.8 cm} + \frac{\tilde \psi_0}{N+1}\sum_{k=1}^2(\langle y_k(t-\tilde\tau_k),v \rangle -M_{n-1}).
\end{split}
\end{equation}
Analogously, we can estimate
\begin{align}
\begin{aligned}
& S_2:= a_{12}(t)(m_{n-1}-\langle y_2(t-\tilde\tau_2),v \rangle + a_{12}(t)(\langle y_1(t),v \rangle-m_{n-1})\cr
& \leq \tilde\psi_0 (m_{n-1}-\langle y_2(t-\tilde\tau_2),v \rangle + \tilde K(\langle y_1(t),v \rangle-m_{n-1}).
\end{aligned} \label{s2}
\end{align}
Putting \eqref{s1} and \eqref{s2} in \eqref{eq}, we can write
\begin{align}
\begin{aligned}
& \frac{d}{d t} \langle x_i(t)-y_1(t),v \rangle \leq \tilde K(M_{n-1}-m_{n-1}) -\tilde K \langle x_i(t)-y_1(t),v \rangle \cr
& + \frac{\tilde \psi_0}{N+1}  \sum_{k \neq i} \chi_{ik} (\langle x_k(t-\tau_{ik}),v \rangle-M_{n-1})+ \frac{\tilde \psi_0}{N+1}\sum_{k=1}^2(\langle y_k(t-\tilde\tau_k),v \rangle -M_{n-1})  \cr
& \hspace{5cm}+ \tilde\psi_0 (m_{n-1}-\langle y_2(t-\tilde\tau_2),v \rangle \cr
& \leq  \tilde K(M_{n-1}-m_{n-1}) -\tilde K \langle x_i(t)-y_1(t),v \rangle \cr
& \hspace{1 cm}+\frac{\tilde\psi_0}{N+1}\sum_{k=1}^2(\langle y_k(t-\tilde\tau_k),v \rangle -M_{n-1})+ \tilde \psi_0 (m_{n-1}-\langle y_2(t-\tilde\tau_2),v \rangle ).
\end{aligned}
\end{align}
Notice that, since $\langle y_k(t-\tilde\tau_k),v \rangle -M_{n-1} \leq 0$ for $k=1,2$ and $t \in [(n-1)\tau, n\tau],$ we have that
$$ \sum_{k=1}^2 (\langle y_k(t-\tilde\tau_k),v \rangle -M_{n-1}) \leq \langle y_2(t-\tilde\tau_2),v \rangle -M_{n-1}. $$
Therefore, we have that 
\begin{equation}
\begin{split}
& \frac{d}{d t} \langle x_i(t)-y_1(t),v \rangle \leq \tilde K(M_{n-1}-m_{n-1}) - \tilde K \langle x_i(t)-y_1(t),v \rangle \\
&  + \frac{\tilde\psi_0}{N+1}(\langle y_2(t-\tilde\tau_2),v \rangle -M_{n-1}+m_{n-1}-\langle y_2(t-\tilde\tau_2),v \rangle ) \\
& =\left( \tilde K-\frac{\tilde\psi_0}{N+1}\right)(M_{n-1}-m_{n-1})- \tilde K \langle x_i(t)-y_1(t),v \rangle,
\end{split}
\end{equation}
where we used that $\tilde \psi_0(m_{n-1}-\langle y_2(t-\tilde\tau_2),v \rangle ) \leq   \frac{\tilde\psi_0}{N+1}(m_{n-1}-\langle y_2(t-\tilde\tau_2),v \rangle ) $. \\
Applying now the Grönwall inequality over $[(n-1)\tau, t],$ with $t \in ((n-1)\tau,n\tau],$ we have
\begin{equation}
\begin{split}
& \langle x_i(t)-y_1(t),v \rangle \leq e^{-\tilde K(t-(n-1)\tau)}\langle x_i((n-1)\tau)-y_1((n-1)\tau),v \rangle \\
& \hspace{1 cm}+ \left( 1- \frac{\tilde\psi_0}{\tilde K(N+1)} \right) (M_{n-1}-m_{n-1})(1-e^{-\tilde K(t-(n-1)\tau)}).
\end{split}
\end{equation}
For $t=n\tau,$ we have 
\begin{equation}
\begin{split}
& d(n\tau)\leq e^{-\tilde K\tau}\langle x_i((n-1)\tau)-y_1((n-1)\tau),v \rangle \\
&\hspace{1.2 cm}+\left( 1- \frac{\tilde\psi_0}{\tilde K(N+1)} \right) (M_{n-1}-m_{n-1})(1-e^{-\tilde K\tau}) \\
& \hspace{1 cm}\leq \left( 1- \frac{\tilde \psi_0}{\tilde K(N+1)}(1-e^{-\tilde K\tau})\right)D_{n-2},
\end{split}
\end{equation}
where we used that $M_{n-1}-m_{n-1}\leq D_{n-1}$ and the monotonicity property of $D_n.$ \\
Notice that if $d(n\tau)=|x_i(n\tau)-x_j(n\tau)|,$ for some fixed $i,j=1,\dots,N,$ using a similar argument, we can find again that 
$$ \frac{d}{d t} \langle x_i(t)-x_j(t),v \rangle \leq \left( \tilde K- \frac{\tilde \psi_0}{N+1}\right)(M_{n-1}-m_{n-1})-\tilde K\langle x_i(t)-x_j(t),v\rangle,$$
for $t \in [(n-1)\tau,n\tau].$ \\
Finally, assume that $d(n\tau)=|y_1(n\tau)-y_2(n\tau)|.$ Let us define again an unit vector $v \in \R^d$ as 
$$ v:=\frac{y_1(n\tau)-y_2(n\tau)}{|y_1(n\tau)-y_2(n\tau)|}.$$
Then, we can write $d(n\tau)=\langle y_1(n\tau)-y_2(n\tau),v\rangle.$ Consider $t \in [(n-2)\tau, n\tau].$ Let us distinguish two cases.\\
\textbf{Case 1.} There exists $t_0 \in [(n-2)\tau,n\tau]$ such that $\langle y_1(t_0)-y_2(t_0),v \rangle<0.$ Applying Lemma \ref{5.6}, we get 
\begin{equation}
\begin{split}
d(n\tau) & \leq e^{-\tilde K(n\tau-t_0)} \langle y_1(t_0)-y_2(t_0),v \rangle + (1-e^{-\tilde K(n\tau-t_0)})D_{n-2} \\
& \leq (1-e^{-\tilde K(n\tau-t_0)})D_{n-2} \\
& \leq (1-e^{-2\tilde K\tau})D_{n-2},
\end{split}
\end{equation}
and we have the statement.
\\\textbf{Case 2.} Assume that $\langle y_1(t)-y_2(t),v \rangle \geq 0, \ \forall t \in [(n-2)\tau, n\tau].$ Consider $t \in [(n-1)\tau,n\tau].$ From \eqref{eqDUE} we have that 
\begin{equation*}
\begin{split}
\frac{d}{d t} \langle  y_1(t)-y_2(t) & ,v  \rangle  = a_{12}(t)\langle y_2(t-\tilde\tau_2)-y_1(t),v \rangle - a_{21}(t)\langle y_1(t-\tilde\tau_1)-y_2(t),v \rangle \\
& = a_{12}(t)\left(\langle y_2(t-\tilde\tau_2,v \rangle -M_{n-1}\right)+a_{12}(t)\left(M_{n-1}-\langle y_1(t),v \rangle \right) \\
& + a_{21}(t)\left(m_{n-1}-\langle y_1(t-\tilde\tau_1),v\rangle \right)+ a_{21}(t)\left(\langle y_2(t),v \rangle - m_{n-1}\right) \\
& \leq \tilde \psi_0 \left(\langle y_2(t-\tilde\tau_2),v \rangle -M_{n-1}\right)+\tilde K \left(M_{n-1}-\langle y_1(t),v \rangle \right) \\
& + \tilde\psi_0 \left(m_{n-1}-\langle y_1(t-\tilde\tau_1),v\rangle \right)+ \tilde K\left(\langle y_2(t),v \rangle - m_{n-1}\right). 
\end{split}
\end{equation*}
Then, 
\begin{equation*}
\begin{split}
\frac{d}{d t} \langle y_1(t)-y_2(t),v \rangle & = \left(\tilde K-\tilde \psi_0\right)(M_{n-1}-m_{n-1})-\tilde K\langle y_1(t)-y_2(t),v \rangle \\
& - \tilde \psi_0 (\langle y_1(t-\tilde\tau_1),v \rangle-\langle y_2(t-\tilde\tau_2),v \rangle ) \\
& \leq \left(\tilde K-\tilde \psi_0\right)(M_{n-1}-m_{n-1})-\tilde K\langle y_1(t)-y_2(t),v \rangle,
\end{split}
\end{equation*}
where, since $t-\tilde\tau_1,t-\tilde\tau_2 \geq (n-2)\tau,$ we use the initial assumption. Therefore, applying the  Grönwall inequality we find that, for $t=n\tau$,
$$ d(n\tau) \leq \left( 1-\frac{\tilde \psi_0}{\tilde K}(1-e^{-\tilde K\tau})\right) D_{n-2}.$$
Thus, the result follows for a suitable constant $C.$
\end{proof}
Again, from this result we can deduce the exponential consensus estimate for the case of two leaders.
\begin{Theorem}\label{maincont_2_leaders}
Every solution
$\left(\{x_i(t)\}_{i=1}^N,\{y_j(t)\}_{j=1}^2\right)$  to \eqref{eqDUE}-\eqref{IC2} satisfies the exponential decay estimate
\begin{equation*} 
d(t)\leq D(0)e^{-\bar{\gamma} (t-2\tau)} \ \mbox{for all} \ t \geq 0,
\end{equation*}
for a suitable positive constant $\bar{\gamma}.$
\end{Theorem}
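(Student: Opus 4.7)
The plan is to mirror the argument of Theorem \ref{main} almost verbatim, since all the structural ingredients are now in place: Lemma \ref{5.6} provides the Gr\"onwall-type recursion \eqref{eq20}, i.e.\ $D_{n+1} \le e^{-\tilde K\tau}d(n\tau) + (1-e^{-\tilde K\tau})D_n$; Lemma \ref{5.7} provides the contraction $d(n\tau) \le C D_{n-2}$ with $C\in(0,1)$; and the monotonicity $D_{n+1}\le D_n$ coming from Lemma \ref{5.2} is unchanged. So the only remaining work is a purely combinatorial iteration of these three ingredients to produce the exponential envelope, exactly as in the common-influencer case.

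First, I would combine the three ingredients to establish the key claim
\[
D_{n+1} \le \bigl(1 - e^{-\tilde K\tau}(1-C)\bigr)\,D_{n-2},\qquad \forall\, n\ge 2.
\]
Indeed, inserting the bound of Lemma \ref{5.7} into \eqref{eq20} and using $D_n\le D_{n-2}$ gives
\[
D_{n+1} \le e^{-\tilde K\tau} C D_{n-2} + (1-e^{-\tilde K\tau})D_{n-2} = \bigl(1 - e^{-\tilde K\tau}(1-C)\bigr)D_{n-2}.
\]
Set $\tilde C := 1 - e^{-\tilde K\tau}(1-C)\in(0,1)$. Iterating the step $n\mapsto n+3$ yields $D_{3n}\le \tilde C^{\,n} D_0$ for all $n\in\N_0$, which can be rewritten as $D_{3n}\le e^{-3n\bar\gamma\tau}D_0$ with $\bar\gamma := \frac{1}{3\tau}\ln(1/\tilde C)>0$.

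Finally, for arbitrary $t\ge 0$, I would choose $n\in\N_0$ with $t\in[3n\tau-\tau,\,3n\tau+2\tau]$ and apply Lemma \ref{5.2} to conclude that $|x_i(t)-x_j(t)|$, $|x_i(t)-y_j(t)|$, and $|y_1(t)-y_2(t)|$ are all dominated by $D_{3n}\le e^{-3n\bar\gamma\tau}D_0 \le e^{-\bar\gamma(t-2\tau)}D_0$. Taking the maximum over all the pairs appearing in the definition of the diameter gives $d(t)\le D_0 e^{-\bar\gamma(t-2\tau)}$ for all $t\ge 0$.

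There is no real obstacle here: the delicate step was Lemma \ref{5.7}, where the two-leader case required the case split (in particular \textbf{Case 1} when $\langle y_1-y_2,v\rangle$ changes sign on $[(n-2)\tau,n\tau]$) to control the inter-leader distance in the absence of a common influencer. Once that contraction is secured, the passage to exponential consensus is the same three-line iteration as in Theorem \ref{main} and Theorem \ref{main_1leaderfermo}, with $\tilde K$ in place of $K$ and the new constant $C$ from \eqref{eq18U}--Lemma \ref{5.7}. The only care needed is to ensure the constant $C$ in Lemma \ref{5.7} is indeed strictly less than $1$ in all three sub-cases of that proof, which it is by inspection (each sub-case produces a factor of the form $1-\eta(1-e^{-\tilde K\tau})$ for some $\eta>0$), so $\tilde C\in(0,1)$ and $\bar\gamma>0$ as required.
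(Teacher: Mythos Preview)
Your proposal is correct and follows exactly the route the paper intends: the paper itself does not spell out a proof of Theorem~\ref{maincont_2_leaders} but simply remarks that it is deduced from Lemma~\ref{5.7} by the same iteration as in Theorem~\ref{main}. Your write-up carries out precisely that iteration (combining \eqref{eq20} with $\tilde K$, Lemma~\ref{5.7}, and the monotonicity of $D_n$), so there is nothing to add.
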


\section{Numerical tests} \label{num}
In this section, we present some numerical simulations illustrating the theoretical results. \\
In Figure 1, we consider the case with one leader with  constant trajectory. We consider the weight functions $b_{ij}(t)$ defined by 
$$\psi(r,r'):=\tilde{\psi}(|r-r'|), \quad r,r'\in [0, +\infty),$$
for all $i=1,\dots,N.$
Meanwhile, the weight functions $c_{i0}(t)$  are assumed to be the same for all $i=1,\dots,N$.
In particular, we assume
\begin{equation} \label{simulations}
\begin{split}
& \tilde{\psi}(r):= e^{-(r-1)^2}, \ r \in [0,+\infty), \\
& c_{i0}(t):= \frac{\tilde{K}}{N}, \ \forall \ i \in \{1,\dots,N\},\\
\end{split}
\end{equation}
with $\tilde{K}$ is a positive constant. Moreover, for simplicity, we assume a constant time delay $\tau_{ij}=5,$ for all $i,j=1,\dots,N.$

\begin{figure} 
\centering
\includegraphics[scale=0.35]{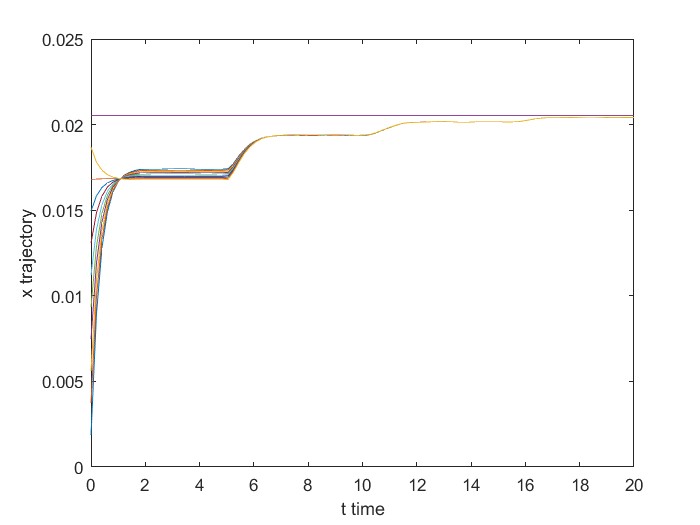}
\caption{HK-model with a unique leader with constant trajectory}
\end{figure}

In Figure 2, we present the case with one leader with a controlled trajectory. Here, we use the weight functions defined in \eqref{simulations} and a constant time delay $\tau_{ij}=1,$ for all $i=1,\dots,N, \ j=0,\dots,N.$
\begin{figure}
\centering
\includegraphics[scale=0.3]{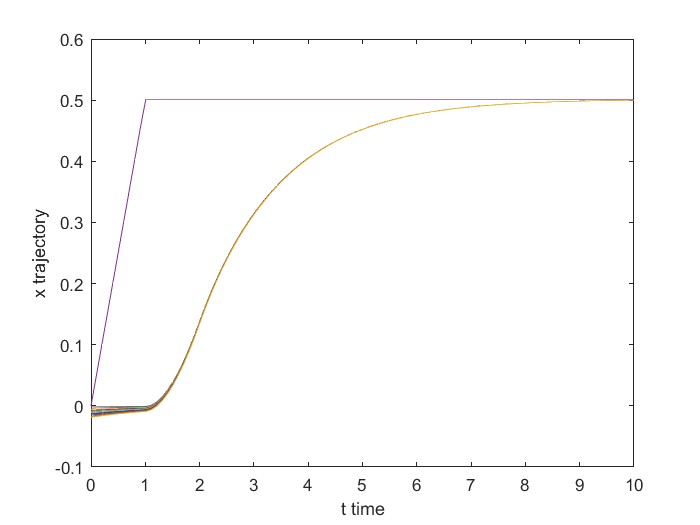}
\includegraphics[scale=0.3]{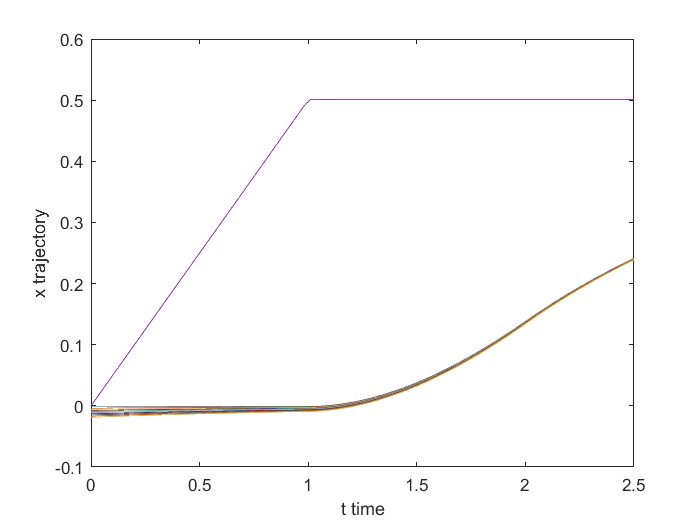}
\caption{HK-model with a unique leader with control}
\end{figure}

In Figure 3, we consider the case with two leaders. Again, we choose the functions $a_{ij}(t)$ for $i,j=1,2,$ and $b_{ij}(t)$ for $i,j=1,\dots,N,$ equal to the function $\tilde{\psi}(r)$ in \eqref{simulations}. Meanwhile, the function $c_{ij}(t)$ is supposed constant and equal to $\frac{\tilde{K}}{N+1}.$ The time delays $\tilde\tau_j$ for $j=1,2$ is constant and $\tilde\tau_j=5.$
\begin{figure}
\centering
\includegraphics[scale=0.35]{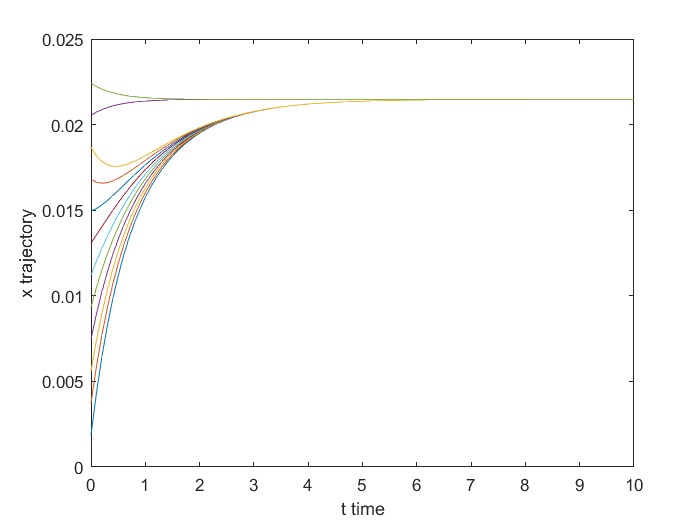}
\caption{HK-model with two leaders}
\end{figure}
For simplicity, we assumed $\chi_{ij}=1$ in all the figures above.
Finally, in Figure 4, we plot the general case in which the Common Influencer assumption \textbf{(CI)} holds. As above, the function $a_{ij}(t)=\tilde{\psi}(r)$ and the time delays $\tau_{ij}=5$ for all $i,j=1,\dots,N.$
\begin{figure} 
\centering
\includegraphics[scale=0.35]{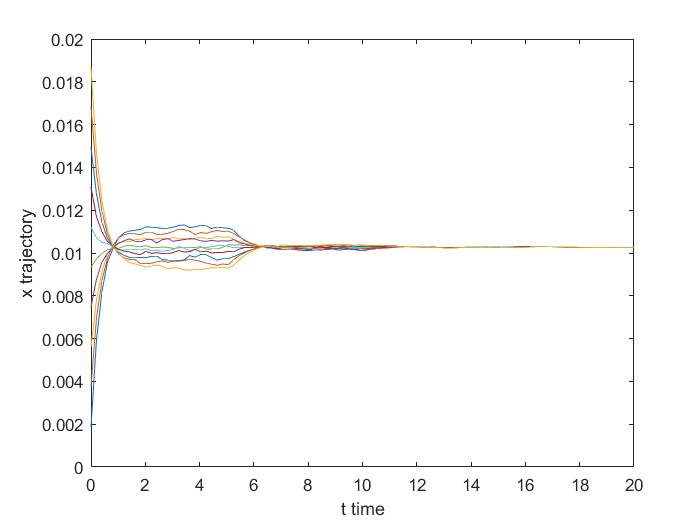}
\caption{HK-model under common influencer assumption}
\end{figure}

\bigskip
\eject
	
	\noindent {\bf Acknowledgements.} {\small C. Cicolani and C. Pignotti are members of Gruppo Nazionale per l’Analisi Ma\-te\-ma\-ti\-ca,
la Probabilità e le loro Applicazioni (GNAMPA) of the Istituto Nazionale di
Alta Matematica (INdAM).  C. Pi\-gnot\-ti is partially
supported by PRIN 2022  (2022238YY5) {\it Optimal control problems: analysis,
approximation and applications}, PRIN-PNRR 2022 (P20225SP98) {\it Some mathematical approaches to climate change and its impacts}, and by INdAM GNAMPA Project {\it Modelli alle derivate parziali per interazioni multiagente non 
simmetriche} (CUP E53C23001670001). }

\end{document}